\documentclass[10pt,twoside,english,reqno,a4paper]{amsart}

\usepackage{listings,graphicx,amsmath,varioref,amscd,amssymb,color,bm,stmaryrd,amsthm,amsfonts,graphics,latexsym,pgf,pst-all} 
\theoremstyle{plain}
\usepackage{esint}

\usepackage{enumerate}

\theoremstyle{plain}
\newtheorem{theorem}{Theorem}[section]

\newtheorem{lemma}[theorem]{Lemma}

\newtheorem{corollary}[theorem]{Corollary}

\usepackage{geometry}
\usepackage[colorlinks=false]{hyperref}

\theoremstyle{definition}
\newtheorem{definition}[theorem]{Definition}

\newtheorem{remark}[theorem]{Remark}
\newtheorem{notation}[theorem]{Notation}

\theoremstyle{remark}

\usepackage{fouriernc}
\usepackage[T1]{fontenc}

\numberwithin{equation}{section}

\newcommand{\na}{\mathbb{N}}

\newcommand{\N}{\mathbb{N}}
\newcommand{\R}{\mathbb{R}}

\def\sob{W^{1,p}_{0}(\Omega)}

\def\lio{{L^{\infty}(\Omega)}}

\def\div{\mathrm{div}}
\def\into{\int_{\Omega}}

\def\ae{\mathrm{a.e.}}

\newcommand{\oF}{\overline{F}}
\newcommand{\uF}{\underline{F}}
\newcommand{\oL}{L^+}
\newcommand{\uL}{L^-}
\newcommand{\oLL}{L^+_\Lambda}
\newcommand{\uLL}{L^-_\Lambda}

\begin{document}
\title[Existence and nonexistence of infinitely many solutions to elliptic problems]{Existence and nonexistence of infinitely many solutions\\
to elliptic problems with oscillating nonlinearities}

\author[A. J. Mart\'{i}nez Aparicio]{Antonio J. Mart\'{i}nez Aparicio}
\address[Antonio J. Mart\'{i}nez Aparicio]{Departamento de Matem\'aticas\newline Uni\-ver\-si\-dad de Alme\-r\'ia\newline Ctra. Sacramento s/n\newline
La Ca\~{n}ada de San Urbano\newline 04120 - Al\-me\-r\'{\i}a, Spain}
\email{ajmaparicio@ual.es}

\author[C. Torres-Latorre]{Clara Torres-Latorre}
\address[Clara Torres-Latorre]{Instituto de Ciencias Matemáticas\newline Consejo Superior de Investigaciones Científicas\newline
	C/ Nicolás Cabrera, 13-15\newline 28049 - Madrid, Spain.}
\email{\tt clara.torres@icmat.es}

\keywords{Infinitely many solutions, bifurcation, quasilinear equation, $p$-Laplacian, non-divergence form}
  
\subjclass[2020]{35J15, 35J62, 35B09, 35B32}

\begin{abstract}
We study sharp conditions for the existence and nonexistence of infinitely many
nonnegative solutions to the problem \mbox{$-\Delta_p u = \lambda f(u)$} in a bounded
domain with Dirichlet boundary conditions, where $f$ is a continuous
function with a sequence of positive zeros converging to zero or
diverging to infinity. Under a growth condition on the primitive
$F(s) = \int_0^s f(t)\,\mathrm{d}t$, we establish ranges of
the parameter $\lambda$ for which infinitely many small or large
solutions exist, as well as ranges where no bifurcation from zero
or infinity can occur. The existence result is obtained via
variational methods for a general class of divergence form operators, while the nonexistence result is established both for the $p$-Laplacian and for uniformly elliptic operators in non-divergence form via an ODE argument.
\end{abstract}

\maketitle

\section{Introduction}

In this article, we investigate the existence and nonexistence of infinitely many nonnegative solutions to problem
\begin{equation}
	\label{eq:Pb_pLap}
	\begin{cases}
		-\Delta_p u = \lambda f(u) & \text{in } \Omega,\\
		u=0 & \text{on } \partial\Omega.
	\end{cases}
\end{equation}
Here, $\Omega\subset \R^N$ ($N\geq 1$) is a bounded domain, $\Delta_p u = \operatorname{div}(|\nabla u|^{p-2} \nabla u)$ denotes the $p$-Laplacian operator ($p>1$), and $\lambda>0$ is a constant. The nonlinearity $f\colon \R \to \R$ is a continuous function, possibly changing sign.

We address, with a unified approach, two different issues: the existence of infinitely many small solutions and the existence of infinitely many large solutions. By \textit{infinitely many small solutions}, we refer to a sequence $\{u_n\}$ of nonnegative (and nontrivial) solutions to~\eqref{eq:Pb_pLap} such that $\|u_n\|_\lio \to 0$. Conversely, we say that~\eqref{eq:Pb_pLap} admits \textit{infinitely many large solutions} if there exists a sequence of bounded solutions $\{u_n\}$ such that $\|u_n\|_\lio \to \infty$.

We establish precise ranges of the parameter $\lambda$ for which infinitely many solutions to~\eqref{eq:Pb_pLap} exist, as well as 
ranges where they do not, under a suitable oscillatory behaviour of $f$. Our existence result significantly generalizes the preceding works~\cite{AC02, CMM25, OZ96}, and to our knowledge, the nonexistence result is the first of its kind in this generality. Moreover, the techniques developed herein can be readily extended to a broader class of operators and nonlinearities.

In what follows, we present our main results for the model problem~\eqref{eq:Pb_pLap}. Subsequently, we provide a brief contextualization within the existing literature and discuss the current state of the art.

\subsection{Main results} 

Our analysis rests upon a single, fundamental assumption regarding the structure of the nonlinearity $f$: for $\ell\in \{0,\infty\}$, 
\begin{equation}
\label{eq:hyp_zeros}
\text{there exists $\{\alpha_n\}\subset \R^+$ such that $\alpha_n\to \ell$ and $f(\alpha_n)=0$ for any $n\in\N$.}
\end{equation}
Under this assumption, we show that the existence and nonexistence of infinitely many solutions to~\eqref{eq:Pb_pLap} depends solely on the asymptotic behaviour of
\[
F(s):=\int_0^s f(t)\, \mathrm{d}t,\ \forall s\geq 0,
\]
at the origin (if $\ell=0$) or at infinity (if $\ell = \infty$).

Our first result concerns the existence of infinitely many solutions.

\begin{theorem}[Infinitely many solutions]
\label{th:Intro_Exist}
Let $\ell\in \{0,\infty\}$, and let $f\in C(\R)$ be a function with $f(0)\geq 0$. Suppose that~\eqref{eq:hyp_zeros} holds, and that
\begin{equation}
\label{eq:hyp_infsup}
    -\infty <\liminf_{s\to \ell^+} \frac{F(s)}{s^p} \leq \limsup_{s\to \ell^+} \frac{F(s)}{s^p} = \oL \in (0,\infty].
\end{equation}
Then, there exists $\overline{\lambda} \in [0,\infty)$ such that, for any $\lambda>\overline{\lambda}$, problem~\eqref{eq:Pb_pLap} has a sequence $\{u_n\}$ of nonnegative (and nontrivial) bounded solutions such that $\|u_n\|_\lio \to \ell$.

Furthermore, if $\oL=\infty$, then $\overline{\lambda}=0$.
\end{theorem}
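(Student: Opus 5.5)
The approach is the classical truncation argument of Omari--Zanolin / Anello--Cordaro type: cut the nonlinearity off at consecutive zeros, minimize, and use comparison to confine each minimizer to a single interval $[0,\alpha_n]$.

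\emph{Truncated problems.} For each $n$ I set $f_n(s)=f(s)$ for $0\le s\le\alpha_n$ and $f_n(s)=f(0)$ for $s<0$. I pick a suitable constant extension for $s>\alpha_n$; since $f(\alpha_n)=0$, the natural choice is $f_n(s)=0$ there. The truncated problem is $-\Delta_p u=\lambda f_n(u)$ with Dirichlet condition. Its energy
\[
J_n(u)=\frac1p\into|\nabla u|^p-\lambda\into F_n(u)
\]
is coercive and weakly lower semicontinuous on $\sob$, because $f_n$ is bounded. So $J_n$ has a global minimizer $u_n$.

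\emph{Confining the minimizers.} Since $f_n(s)\ge0$ for $s\le0$ (here $f(0)\ge0$ is used), testing with $u_n^-$ gives $u_n\ge0$. Testing the equation with $(u_n-\alpha_n)^+$ and using $f_n=0$ on $[\alpha_n,\infty)$ gives $u_n\le\alpha_n$. Hence $u_n$ solves \eqref{eq:Pb_pLap} itself and $\|u_n\|_\lio\le\alpha_n$. For $\ell=0$ this already gives $\|u_n\|_\lio\to0$. For $\ell=\infty$ I must still show $\|u_n\|_\lio\to\infty$ and that the $u_n$ are distinct; the next step does both.

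\emph{Nontriviality and separation via energy levels.} Fix a ball $B_R\subset\Omega$ and a cutoff $w_s\in\sob$ with $w_s=s$ on $B_{R/2}$, $0\le w_s\le s$, and $|\nabla w_s|\le Cs/R$. Let $m=\liminf F(s)/s^p>-\infty$; after shifting, $F(s)\ge -K s^p$ near $\ell$, and $F$ is bounded on compact sets. Then
\[
J_n(w_s)\le \frac{C s^p}{p} -\lambda\left(|B_{R/2}|F(s) - K s^p|B_R\setminus B_{R/2}| - C'\right).
\]
Choose $s=s_k\to\ell$ with $F(s_k)\ge(\oL-\varepsilon)s_k^p$ (or $F(s_k)/s_k^p\to\infty$ if $\oL=\infty$). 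Then for
\[
\lambda>\overline\lambda:=\frac{C/p}{|B_{R/2}|\oL-K|B_R\setminus B_{R/2}|}
\]
we get $J_n(w_{s_k})\le -c\,s_k^p$ for some $c>0$. The annulus can be made relatively thin, so the denominator is positive; when $\oL=\infty$ any $\lambda>0$ works, giving $\overline\lambda=0$.

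The additive constants are where care is needed, and I treat the two cases separately. For $\ell=0$ the constant $C'$ does not appear: $F\ge -Ks^p$ holds on all of $[0,s]$ for small $s$, so the inequality is homogeneous. For $\ell=\infty$ the bound $-c s_k^p$ dominates $C'$.

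\emph{Extracting the sequence.} Fix $k$ and take $n$ with $\alpha_n\ge s_k$. Then $w_{s_k}$ is admissible and $J_n(u_n)\le -c s_k^p$. On the other hand, $\|u_n\|_\lio\le\alpha_n$ yields the lower bound $J_n(u_n)\ge -\lambda|\Omega|\max_{[0,\alpha_n]}F^+$. These two estimates suffice:

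\begin{itemize}
\item For $\ell=\infty$: $J(u_n)\le -c s_k^p\to-\infty$ along a subsequence, while $J(u)\ge -\lambda|\Omega|\max_{[0,\|u\|_\infty]}|F|$. Hence $\|u_n\|_\lio\to\infty$.
\item For $\ell=0$: every $u_n$ is nontrivial because $J(u_n)<0=J(0)$. Infinitely many of them are distinct because $\|u_n\|_\lio\le\alpha_n\to0$ while each $u_n\neq0$.
\end{itemize}

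The main obstacle is the energy comparison in the third step. It has to produce a threshold $\overline\lambda$ that is independent of $n$, using only the limsup along a sequence together with the liminf lower bound that controls the transition layer.
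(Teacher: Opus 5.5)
Your proof is correct. For $\ell=0$ it is essentially the paper's argument. The paper also truncates at the zeros $\alpha_n$, minimizes, and tests with a plateau function. Its plateau is $\gamma_n\min\{1,\operatorname{dist}(x,\partial\Omega)/\delta\}$ with a thin boundary strip, where you use an interior ball with a thin annulus. The paper controls the transition layer via $-\min_{[0,\gamma_n]}F\le MF(\gamma_n)$, which is exactly what your bound $F\ge -Ks^p$ provides.

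For $\ell=\infty$ your route is genuinely different and simpler. The paper proves the strict inequality $I_{\sigma(n)}(\lambda,w_n)<I_n(\lambda,u_n)$. This compares a plateau at height $\gamma_n\in(\alpha_{\sigma(n)-1},\alpha_{\sigma(n)}]$ with the minimum at the lower truncation level $\alpha_n$. That comparison requires the auxiliary quantity $\uF(\alpha_n,\gamma_n)$, a diagonal extraction guaranteeing $\uF(\alpha_n,\gamma_n)\ge\oF(\gamma_n)/(2(1+M))$, and the reindexing $\sigma$. It yields the localization $\|u_{\sigma(n)}\|_\lio\in(\alpha_n,\alpha_{\sigma(n)}]$.

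You instead combine two facts:
\begin{itemize}
\item The minimum values satisfy $\min J_n\le J(w_{s_k})\le -c\,s_k^p$ for all large $n$, so $\min J_n\to-\infty$.
\item There is the a priori bound $J(u)\ge-\lambda|\Omega|\max_{[0,\|u\|_\lio]}F^+$, valid because the gradient term is nonnegative and $F_n=F$ on $[0,\alpha_n]$.
\end{itemize}
Local boundedness of $F$ then forces $\|u_n\|_\lio\to\infty$.

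Your version buys several things. It treats both values of $\ell$ almost uniformly. It transfers verbatim to the general operators with $\Phi\ge 0$ and to the weaker hypothesis \eqref{eq:hyp_general}. It gives up only the explicit placement of each solution between two prescribed zeros, which the theorem does not need.

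One point of presentation needs fixing. When you thin the annulus to $B_R\setminus B_{(1-\eta)R}$, the gradient constant $C$ blows up as $\eta\to0$, so your displayed threshold should carry this $\eta$-dependence. Since $\eta$ is fixed once, depending only on $\oL$ and $K$, $\overline{\lambda}$ stays finite and independent of $n$, as required.
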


\begin{notation}
The limits in Theorem \ref{th:Intro_Exist} are taken as $s\to0^+$ if $\ell=0$ and as $s\to\infty$ if $\ell=\infty$.
\end{notation}

Actually, Theorem~\ref{th:Intro_Exist} is obtained as a direct consequence of a more general result, Theorem~\ref{th:General_Exist}, where the growth condition~\eqref{eq:hyp_infsup} is significantly relaxed. The proof of Theorem~\ref{th:General_Exist} is conducted using variational methods combined with a suitable truncation argument.

Regarding the nonexistence of infinitely many small or large solutions, we establish a stronger result concerning the absence of bifurcation points. We recall that a value $\tilde \lambda\geq 0$ is said to be a \textit{bifurcation point from $\ell\in\{0,\infty\}$} for problem~\eqref{eq:Pb_pLap} if there exists a sequence of pairs $\{(\lambda_n,u_n)\} \subset \R^+ \times L^\infty(\Omega)$ with $u_n\geq 0$ and $u_n \not\equiv 0$ solving~\eqref{eq:Pb_pLap} such that $\lambda_n\to \tilde\lambda$ and $\|u_n\|_\lio \to \ell$.

Below, we state the result regarding the nonexistence of bifurcation points.

\begin{theorem}[Nonexistence of bifurcation points]
\label{th:Intro_NonExist}
Let $\ell\in \{0,\infty\}$, and let $f\in C(\R)$. Suppose that~\eqref{eq:hyp_zeros} holds and that
\begin{equation}
\label{eq:hyp_infsup2}
    -\infty <\uL = \liminf_{s\to \ell^+} \frac{F(s)}{s^p} \leq \limsup_{s\to \ell^+} \frac{F(s)}{s^p} = \oL < \infty.
\end{equation}
Then, there exists $\underline{\lambda}\in (0,\infty]$ such that no $\lambda\in [0, \underline{\lambda})$ is a bifurcation point from $\ell$ for problem~\eqref{eq:Pb_pLap}.

Furthermore, if $\oL<0$ or $\uL=\oL= 0$, then $\underline{\lambda}=\infty$.
\end{theorem}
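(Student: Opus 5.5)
We argue by contradiction via an ODE argument along the direction where $u$ attains its maximum. Suppose $\tilde\lambda\in[0,\underline\lambda)$ is a bifurcation point from $\ell$. Then there are pairs $(\lambda_n,u_n)$ with $\lambda_n\to\tilde\lambda$, $u_n\ge0$ nontrivial and bounded, and $M_n:=\|u_n\|_\lio\to\ell$. The value of $\underline\lambda$ will come out of the argument, depending only on $\uL,\oL,p$ and $\operatorname{diam}\Omega$.

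\textbf{Step 1: an energy inequality.} The key tool is a Pohozaev/ODE-type comparison. Pick $x_n$ with $u_n(x_n)=M_n$, and compare $u_n$ with radial solutions of
\[
-(r^{N-1}|v'|^{p-2}v')'=\lambda r^{N-1}f(v),\qquad v(0)=M_n,\quad v'(0)=0.
\]
For such radial profiles, multiplying by $v'$ shows that $E(r)=\frac{p-1}{p}|v'|^p+\lambda F(v)$ is nonincreasing. Hence $\frac{p-1}{p}|v'|^p\le\lambda(F(M_n)-F(v))$, and $v$ needs a time at least
\[
R_n\ \ge\ \int_0^{M_n}\Bigl(\tfrac{p}{(p-1)\lambda}\bigl(F(M_n)-F(s)\bigr)_+\Bigr)^{-1/p}ds
\]
to go from $M_n$ down to $0$. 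In the non-divergence setting, the analogous one-dimensional bound would come from comparison with extremal Pucci operators, giving 1D ODEs of the same shape.

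The transfer from $u_n$ to the radial bound is what I expect to be the main obstacle. The natural route is a comparison or sweeping argument: if the radial solution stays positive on a ball of radius $\operatorname{diam}\Omega$ around $x_n$ while lying below $M_n$, one slides it and touches $u_n$ to get a contradiction with the strong comparison principle. Alternatively, for $p$-Laplacian solutions one can try the gradient bound $\frac{p-1}{p}|\nabla u|^p\le\lambda(F(M)-F(u))$, a Payne-type $P$-function estimate, which I would attempt first since it avoids the sliding. In either form we obtain the necessary condition
\[
\operatorname{diam}\Omega\ \ge\ c_p\,\lambda_n^{-1/p}\int_0^{M_n}\bigl(F(M_n)-F(s)\bigr)_+^{-1/p}\,ds .
\]
Note that $F(M_n)\ge F(s)$ on the relevant range is forced, since otherwise the maximum could not be reached.

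\textbf{Step 2: asymptotics under the hypothesis on $F$.} Substitute $s=M_nt$. By the hypothesis of Theorem~\ref{th:Intro_NonExist} there is a constant $K$ with $|F(s)|\le Ks^p$ for $s$ near $\ell$. This gives $F(M_n)-F(M_nt)\le M_n^p(\oL+\varepsilon-(\uL-\varepsilon)t^p)$ on the portion of the range where $M_nt$ lies near $\ell$. For $\ell=\infty$, the part of $[0,M_n]$ near $0$ carries a bounded $F$ and contributes negligibly after scaling. Hence the integral is bounded below by
\[
c\int_0^1\bigl(\oL-\uL t^p+2\varepsilon\bigr)_+^{-1/p}\,dt=:I>0 ,
\]
independently of $n$. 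This yields $\lambda_n\ge (c_pI/\operatorname{diam}\Omega)^p$, so $\tilde\lambda\ge\underline\lambda:=(c_pI/\operatorname{diam}\Omega)^p>0$, a contradiction.

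\textbf{Step 3: the degenerate cases.} If $\oL<0$, then $F(M_n)<0$ for large $n$ while $F(s)\to F(0)=0$ as $s\to0$, so $F(M_n)-F(s)<0$ somewhere below $M_n$. The energy inequality then forbids descent to $0$, so no solutions exist and $\underline\lambda=\infty$; for $\ell=0$ the zeros in \eqref{eq:hyp_zeros} handle the sign bookkeeping. If $\uL=\oL=0$, then $F(M_n)-F(M_nt)=o(M_n^p)$ uniformly in $t$, so $I$ blows up as $\varepsilon\to0$ and $\underline\lambda=\infty$.

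For the case $\ell=0$, hypothesis \eqref{eq:hyp_zeros} is used to ensure that the maxima $M_n$ can be taken so that the ODE is well defined near the zeros.
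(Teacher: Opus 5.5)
There is a genuine gap in Step~1, and it is the heart of the proof. Your Steps~2--3 are sound for a radial solution on a ball. They refine the paper's cruder estimate $\frac{p-1}{p}|v_n'(r)|^p\le\lambda_n\oF(v_n(0))$ combined with the Mean Value Theorem. (Incidentally, the factor in your first display should read $(\frac{p\lambda}{p-1}(\cdot)_+)^{-1/p}$, consistent with the $\lambda_n^{-1/p}$ in your second.) But you never pass from $u_n$ on an arbitrary bounded $\Omega$ to such a radial profile, and neither of your suggested routes works in this generality.

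The Payne-type bound $\frac{p-1}{p}|\nabla u|^p\le\lambda(F(M)-F(u))$ is what the $P$-function argument gives when $\partial\Omega$ has nonnegative mean curvature, and it is false in general. Take an annulus $\{a<|x|<b\}$ with $N\ge2$ and $f\equiv1$. The solution is radial, and $E(r)=\frac{p-1}{p}|v'|^p+\lambda F(v)$ satisfies $E'(r)=-\frac{N-1}{r}|v'|^p\le0$. So if $r_0\in(a,b)$ is the maximum point, then $E(a)>E(r_0)=\lambda F(M)$, i.e.\ the bound fails on $\{|x|=a\}$.

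The sliding route needs a (strong) comparison principle for $-\Delta_p w=\lambda f(w)$, which is not available for a non-monotone, sign-changing, merely continuous $f$. Moreover, the initial value problem $v(0)=M_n$, $v'(0)=0$ need not be uniquely solvable; if $f(M_n)=0$, the constant $M_n$ is a solution that never descends. So there is no canonical comparison profile. A telling symptom is that \eqref{eq:hyp_zeros} plays no real role in your argument, whereas it is exactly what makes the transfer possible.

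The paper's reduction works as follows. Pass to a subsequence with $\|u_n\|_\lio\in(\min\{\alpha_{n-1},\alpha_{n+1}\},\alpha_n]$, and fix a ball $B_R\supset\Omega$. If $f(0)\ge0$, the extension $\underline u_n$ of $u_n$ by zero is a subsolution of $-\Delta_p v=\lambda_n f(v)$ in $B_R$ with $v=0$ on $\partial B_R$. The constant $\alpha_n$ is a supersolution because $f(\alpha_n)=0$. The sub-supersolution method then gives a solution $v_n$ that is maximal among solutions with values in $[0,\alpha_n]$ and satisfies $\underline u_n\le v_n\le\alpha_n$. Every rotation of $v_n$ is again such a solution, so maximality forces $v_n$ to be radial. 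Moreover $\|v_n\|_{L^\infty(B_R)}\in(\min\{\alpha_{n-1},\alpha_{n+1}\},\alpha_n]$ still tends to $\ell$. Your Steps~2--3 then apply to $v_n$, with $\|v_n\|_{L^\infty(B_R)}$ in place of $M_n$ and $R$ in place of $\operatorname{diam}\Omega$.

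This reduction needs $f(0)\ge0$. When $f(0)<0$ (possible only for $\ell=\infty$), the paper replaces $f$ by $g=f^+$ on $[0,\alpha_1]$ and $g=f$ on $[\alpha_1,\infty)$. Since $g\ge f$, solutions of the original problem are subsolutions for $g$, and the constants $\alpha_n$ remain supersolutions. Hence every bifurcation point from infinity of the original problem is one for the $g$-problem. Finally, $G-F$ is constant on $[\alpha_1,\infty)$, so the limits in \eqref{eq:hyp_infsup2}, and hence $\underline\lambda$, are unchanged.
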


To show this result, we combine the sub-supersolution method with a symmetry argument. This approach allows us to reduce the analysis to a study of the resulting ordinary differential equation.

Note that Theorems~\ref{th:Intro_Exist} and~\ref{th:Intro_NonExist} are complementary and can be combined to obtain a full picture of problem~\eqref{eq:Pb_pLap}. Together, they provide sharp conditions on the parameter $\lambda$ that separate the regions of infinite multiplicity from those where bifurcation is impossible. We illustrate these combined results in the case where $F\geq 0$.

\begin{corollary}
\label{cor:F_nonnegative}
Let $\ell\in \{0,\infty\}$, and let $f\in C(\R)$ be such that $F\geq 0$. Suppose that~\eqref{eq:hyp_zeros} holds and denote
\begin{equation*}
    \limsup_{s\to \ell^+} \frac{F(s)}{s^p} = \oL \in [0,\infty].
\end{equation*}
The following statements hold:
\begin{enumerate}[i)]
    \item If $\oL=0$, then problem~\eqref{eq:Pb_pLap} has no bifurcation point from $\ell$.
    \item If $\oL\in (0,\infty)$, then there exist constants $0 < \underline{\lambda} \leq \overline{\lambda} < \infty$ such that:
    \begin{itemize}
        \item No $\lambda\in [0,\underline{\lambda})$ is a bifurcation point from $\ell$.
        \item For any $\lambda\in (\overline{\lambda},\infty)$, problem~\eqref{eq:Pb_pLap} admits infinitely many small (if $\ell=0$) or large (if $\ell=\infty$) solutions.
    \end{itemize}
    \item If $\oL=\infty$, then problem~\eqref{eq:Pb_pLap} admits infinitely many small (if $\ell=0$) or large (if $\ell=\infty$) solutions for any $\lambda>0$.
\end{enumerate}

\end{corollary}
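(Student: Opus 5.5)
The plan is to derive each item directly from Theorems~\ref{th:Intro_Exist} and~\ref{th:Intro_NonExist}. The first step is to check the common hypotheses. Since $F\geq 0$, we have
\[
\liminf_{s\to\ell^+} \frac{F(s)}{s^p}\geq 0>-\infty,
\]
so the lower bound in~\eqref{eq:hyp_infsup} and~\eqref{eq:hyp_infsup2} always holds. Next, $f(0)\geq 0$, which Theorem~\ref{th:Intro_Exist} requires, follows from $F\geq 0$. Indeed, if $f(0)<0$, continuity gives $f<0$ on some interval $[0,\delta]$, and then $F(s)<0$ for $s\in(0,\delta]$, a contradiction. Assumption~\eqref{eq:hyp_zeros} is given.

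For item i), $\oL=0$ together with $F\geq 0$ yields
\[
0\leq \liminf_{s\to\ell^+} \frac{F(s)}{s^p}\leq \limsup_{s\to\ell^+} \frac{F(s)}{s^p}=0,
\]
so $\uL=\oL=0$. Theorem~\ref{th:Intro_NonExist} then gives $\underline{\lambda}=\infty$, meaning no $\lambda\geq 0$ is a bifurcation point from $\ell$.

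For item ii), $\oL\in(0,\infty)$ and $\uL\geq 0>-\infty$, so both theorems apply. Theorem~\ref{th:Intro_NonExist} provides $\underline{\lambda}\in(0,\infty]$. Theorem~\ref{th:Intro_Exist} provides $\overline{\lambda}\in[0,\infty)$ such that for every $\lambda>\overline{\lambda}$ there is a sequence of nonnegative nontrivial solutions with $\|u_n\|_\lio\to\ell$.

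It remains to show $\underline{\lambda}\leq\overline{\lambda}$ with both finite and positive. Take any $\lambda>\overline{\lambda}$. The constant sequence $\lambda_n=\lambda$ together with the solutions $u_n$ shows that $\lambda$ is a bifurcation point from $\ell$. Hence $\lambda\notin[0,\underline{\lambda})$, that is, $\lambda\geq\underline{\lambda}$. Letting $\lambda\downarrow\overline{\lambda}$ gives $\underline{\lambda}\leq\overline{\lambda}<\infty$. In particular $\underline{\lambda}$ is finite, and $\overline{\lambda}\geq\underline{\lambda}>0$.

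For item iii), $\oL=\infty$ and the liminf is finite and nonnegative, so Theorem~\ref{th:Intro_Exist} applies with $\overline{\lambda}=0$. This gives infinitely many small or large solutions for every $\lambda>0$.

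The proof has no real obstacle. The only points needing care are:
\begin{itemize}
\item checking that $f(0)\geq 0$ follows from $F\geq 0$;
\item the ordering argument in ii), which rests on observing that an existence sequence at fixed $\lambda$ makes $\lambda$ a bifurcation point.
\end{itemize}
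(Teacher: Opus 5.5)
Your proposal is correct and takes the same approach as the paper, which presents this corollary as a direct combination of Theorems~\ref{th:Intro_Exist} and~\ref{th:Intro_NonExist} without writing out a separate proof. Your two added details are both valid and fill in what the paper leaves implicit: deducing $f(0)\geq 0$ from $F\geq 0$, and obtaining $\underline{\lambda}\leq\overline{\lambda}$ by noting that every $\lambda>\overline{\lambda}$ is itself a bifurcation point via the constant sequence $\lambda_n=\lambda$.
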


\subsection{Background}

The study of the existence of infinitely many solutions for problems such as~\eqref{eq:Pb_pLap} has been a highly active line of research in recent decades. It is now well-established that this phenomenon is triggered by the oscillatory behaviour of the nonlinearity $f$ or, more precisely, of its primitive $F$.

In~\cite{OZ96}, Omari and Zanolin investigated problem~\eqref{eq:Pb_pLap} under the assumptions $f(0)\geq 0$ and
\begin{equation}
\label{eq:Intro_OZ}
-\infty<\liminf_{s\to \infty} \frac{F(s)}{s^p} \leq 0 \qquad \text{and} \qquad \limsup_{s\to \infty} \frac{F(s)}{s^p} = \infty.
\end{equation}
Employing a strategy based on time-mapping estimates, they proved that problem~\eqref{eq:Pb_pLap} has an unbounded sequence of nonnegative solutions for any $\lambda>0$. Since then, the approach of considering highly oscillating nonlinearities---where the limit of $\frac{F(s)}{s^p}$ at zero or infinity does not exist---has been further explored. In these subsequent works, the range of $\lambda$ for which infinite solutions exist is determined by the limit inferior and limit superior. For instance, in~\cite{BMR12} the authors showed for a nonnegative $f$ that condition
\begin{equation}
\label{eq:Intro_OZ2}
0\leq \liminf_{s\to 0^+} \frac{F(s)}{s^p}  < \limsup_{s\to 0^+} \frac{F(s)}{s^p} = \infty,
\end{equation}
guarantees the existence of infinitely many small solutions to~\eqref{eq:Pb_pLap} for any $\lambda\in \big(0, \big(C\liminf_{s\to 0^+} \frac{F(s)}{s^p}\big)^{-1}\big)$, with $C>0$. Other notable contributions in this direction are~\cite{BM10, OO06}.

A different approach, based on a variational principle of Ricceri (\cite{Ric00}), was introduced by Anello and Cordaro in~\cite{AC02}. In their setting, the oscillating behaviour of $f$ is characterized by the following assumption: there exist two sequences $0<s_n<s_n'$ with $s_n'\to 0$ such that
\begin{equation}
\label{eq:Intro_AC1}
F(s_n) = \max_{s\in[s_n,s_n']} F(s).
\end{equation}
Under condition~\eqref{eq:Intro_AC1}, the authors established the existence of infinitely many small solutions to~\eqref{eq:Pb_pLap} for any $\lambda>0$, provided the nonlinearity simply satisfies
\begin{equation*}
-\infty < \liminf_{s\to 0^+} \frac{F(s)}{s^p}  \leq \limsup_{s\to 0^+} \frac{F(s)}{s^p} = \infty.
\end{equation*}
For nonlinearities with an oscillatory behaviour at infinity, a similar strategy remains effective provided that $f$ has a subcritical growth (see~\cite{MP14}). It is worth noting that, within this framework, the limit of $\frac{F(s)}{s^p}$ at zero or infinity may exist. The approach introduced in~\cite{AC02} has had a significant impact on the field, and its results have been generalized and extended to a wide variety of contexts (see, for instance,~\cite{HZ09, MP14}).

It should be observed that~\eqref{eq:Intro_AC1} forces $f$ to be nonpositive within the non-degenerate intervals $[s_n,s_n']$. This assumption is remarkably restrictive, as it excludes the case where $f\geq 0$ has only isolated zeros. Such nonlinearities are studied in~\cite{CMM25} with the usual Laplacian, where the authors consider functions of the form $f(s)=s^{1/r}\big(1+\sin\frac{1}{s}\big)$ or $f(s)=s^r(1+\sin s)$, with $r>0$. Notably, these functions satisfy neither~\eqref{eq:Intro_OZ} nor~\eqref{eq:Intro_OZ2}, since the limit of $\frac{F(s)}{s^p}$ at zero or infinity actually exists. By constructing explicit subsolutions, they show the existence of infinitely many solutions for any $\lambda>0$ when $r>1$, and for $\lambda>\overline{\lambda}>0$ when $r=1$. Further results on the existence of infinitely many solutions can be found in~\cite{CCGS16, FS25, KM10,  Ryn00}.

In the present work, we introduce a novel approach that significantly generalizes the framework established in~\cite{AC02} and encompasses the classes of nonlinearities investigated in~\cite{CMM25}. Our central assumption is~\eqref{eq:hyp_zeros}: we require that $f$ has a sequence of positive zeros that either converges to zero or diverges to infinity. Unlike~\eqref{eq:Intro_AC1}, condition~\eqref{eq:hyp_zeros} covers the case where $f\geq 0$ has a sequence of isolated zeros. Furthermore, while there exist nonlinearities satisfying~\eqref{eq:Intro_OZ} or~\eqref{eq:Intro_OZ2} but not~\eqref{eq:hyp_zeros}, most of the practical examples addressed in~\cite{OZ96} and subsequent related studies involve nonlinearities with large dead zones; that is, they satisfy condition~\eqref{eq:hyp_zeros} (see, for instance,~\cite[Section~4]{BM10} or~\cite[Section~4]{BMR12}).

A general example covered by our assumptions but not by those of preceding works is the nonlinearity 
\[
f(s)=g(s)(1+\sin s),\ \forall s\geq 0,
\]
where $g\colon \R\to \R^+$ is continuous and nondecreasing. In this case, if $\lim_{s\to \infty} \frac{\int_0^s g(t)\, \mathrm{d}t}{s^p} = \oL \in [0,\infty]$, then it follows that $\lim_{s\to \infty} \frac{F(s)}{s^p} = \oL$ as well, and Corollary~\ref{cor:F_nonnegative} directly applies.

\subsection{Plan of the paper}

The strategies developed in this work are robust; indeed, the theorems stated above can be extended to other operators. To illustrate this, we show that Theorem~\ref{th:Intro_Exist} remains valid for problems of the form
\begin{equation}
	\label{eq:div_problem}
	\begin{cases}
		-\div(\mathbf{a}(x,\nabla u)) = \lambda f(u) & \text{in } \Omega,\\
		u=0 & \text{on } \partial\Omega,
	\end{cases}
\end{equation}
where $\div(\mathbf{a}(x,\nabla u))$ is a variational operator with $p$-growth that generalizes the $p$-Laplacian. Furthermore, a modified version of Theorem~\ref{th:Intro_NonExist} holds for the non-divergence form problem
\begin{equation}
    \label{eq:nondiv_problem}
    \begin{cases}
    -\mathcal{L}u = \lambda f(u) & \text{in } \Omega, \\
    u = 0 & \text{on } \partial\Omega,
    \end{cases}
\end{equation}
where $\mathcal{L}u= \operatorname{Tr}(AD^2u)$ is a linear elliptic operator with bounded measurable coefficients.

The rest of the paper is organized as follows. In Section~\ref{sec:setting}, after establishing some notation, we specify the exact assumptions on the operators in problems~\eqref{eq:div_problem} and~\eqref{eq:nondiv_problem}. Next, in Section~\ref{sec:existence}, a more general version of Theorem~\ref{th:Intro_Exist} is proved for problem~\eqref{eq:div_problem}. Finally, Section~\ref{sec:nonexistence} is devoted to the proof of the nonexistence Theorem~\ref{th:Intro_NonExist}, along with a suitably adapted version for problem~\eqref{eq:nondiv_problem}.

\section{Setting}
\label{sec:setting}

Throughout this paper, $\Omega \subset \R^N$ denotes a bounded open domain, and $f : \R \to \R$ is a continuous function. For each $s\geq 0$, we write 
\[
F(s) = \int_0^sf(t)\, \mathrm{d}t.
\]
We also define, for all $s\geq 0$, the auxiliary function
\begin{equation}
    \label{eq:def_ov_F}
    \oF(s) := \max \left\{ \int_t^{s} f(\rho)\, \mathrm{d}\rho : 0\leq t\leq s \right\} = F(s) - \min_{t\in[0,s]} F(t).
\end{equation}
In particular, if $f\geq 0$, then $\oF = F$.

When dealing with uniformly elliptic non-divergence form operators, we use a modified version of $F$. For $s\geq 0$, we denote
\begin{equation}
    \label{eq:def_F_Lambda}
    F_\Lambda(s) := \int_0^s f^+(t)\,\mathrm{d}t 
    - \frac{1}{\Lambda^2}\int_0^s f^-(t)\,\mathrm{d}t,
\end{equation}
where $\Lambda \geq 1$ is the ellipticity constant, $f^+ := \max\{f,0\}$ and $f^- := \max\{-f,0\}$. Analogously to $\oF$, for each $s\geq 0$ we define the function
\begin{equation}
    \label{eq:def_ov_F_Lambda}
    \overline{F}_\Lambda(s) := \max \left\{ \int_t^s f^+(\rho) \, \mathrm{d}\rho - \frac{1}{\Lambda^2}\int_t^s f^-(\rho) \, \mathrm{d}\rho : 0 \leq t \leq s \right\} = F_\Lambda(s) - \min_{t\in[0,s]} F_\Lambda(t).
\end{equation}
Note that $F_1=F$ and $\oF_1 = \oF$.

\subsection{Divergence form operators}
We consider operators of the form 
\begin{equation*}
    \mathcal{L}u = \div(\mathbf{a}(x,\nabla u)),
\end{equation*}
where $\mathbf{a} : \Omega \times \R^N \to \R^N$ is a Carath\'eodory function (i.e. measurable in $x$ for all fixed $\xi$ and continuous in $\xi$ for almost every fixed $x$) satisfying:

\begin{itemize}
\item[\hypertarget{hyp:potential}{(H1)}] (Potential structure) $\mathbf{a}(x,\xi) = \nabla_\xi \Phi(x,\xi)$ for a function $\Phi:\Omega\times\R^N\to\R$ with $\Phi(x,0)=0$ for $\ae$ $x\in\Omega$.

\item[\hypertarget{hyp:growth}{(H2)}] (Growth and coercivity) There exist constants $0 < \alpha \leq \beta$ and $1 < p < \infty$ such that for $\ae$ $x \in \Omega$ and all $\xi \in \R^N$,
\begin{equation*}
\frac{\alpha}{p} |\xi|^p \leq \Phi(x,\xi) \leq \frac{\beta}{p} |\xi|^p.
\end{equation*}

\item[\hypertarget{hyp:convex}{(H3)}] (Strict convexity) The map $\xi \mapsto \Phi(x,\xi)$ is strictly convex for $\ae$ $x \in \Omega$.
\end{itemize}

Conditions \textnormal{(\hyperlink{hyp:potential}{H1})--(\hyperlink{hyp:convex}{H3})} imply the standard growth, coercivity, and monotonicity conditions on $\mathbf{a}$. Indeed, since $\Phi$ is convex with $\Phi(x,0)=0$, one has $\Phi(x,\xi) \leq \mathbf{a}(x,\xi)\cdot\xi$, which together with the lower bound in \textnormal{({\hyperlink{hyp:growth}{H2}})} gives the coercivity $\mathbf{a}(x,\xi)\cdot\xi \geq \frac{\alpha}{p}|\xi|^p$. For the growth bound, convexity yields $\mathbf{a}(x,\xi)\cdot\eta \leq \Phi(x,\xi+\eta)$ for all $\eta$; taking $\eta= \mathbf{a}(x,\xi) |\mathbf{a}(x,\xi)|^{-1} |\xi|$ and using the upper bound in \textnormal{({\hyperlink{hyp:growth}{H2}})} gives $|\mathbf{a}(x,\xi)| \leq \frac{\beta}{p}\, 2^p|\xi|^{p-1}$. Finally, strict convexity \textnormal{(\hyperlink{hyp:convex}{H3})} implies strict monotonicity: $(\mathbf{a}(x,\xi)-\mathbf{a}(x,\eta))\cdot(\xi-\eta)>0$ for $\xi\neq\eta$.

This framework includes the $p$-Laplacian (where $\Phi(x,\xi) = \frac{1}{p}|\xi|^p$ and $\textbf{a}(x,\xi) = |\xi|^{p-2}\xi$) as well as uniformly elliptic divergence form operators in the linear case $p=2$, given by $\Phi(x,\xi) = \frac{1}{2}\xi^T A(x) \xi$ and $\textbf{a}(x,\xi) = A(x) \xi$, with $A$ symmetric and $\alpha I \leq A(x) \leq \beta I$.

\subsection{Non-divergence form operators}
We also consider non-divergence form elliptic operators with bounded measurable coefficients,
\begin{equation}\label{eq:nondiv_op}
    \mathcal{L}u = \sum\limits_{i,j = 1}^Na_{ij}(x)\partial^2_{ij}u, \quad \frac{1}{\Lambda} I \leq A(x) \leq \Lambda I,
\end{equation}
where $\Lambda \geq 1$. The extremal operators among this class are the Pucci operators, which play a key role in the theory of fully nonlinear elliptic equations.
\begin{definition}
The \emph{Pucci extremal operators} are defined by
\begin{align*}
\mathcal{M}^+(D^2u) &:= \sup\limits_{\frac{1}{\Lambda} I \leq M \leq \Lambda I}\operatorname{Tr}(MD^2u) = \Lambda \sum_{e_i > 0} e_i + \frac{1}{\Lambda} \sum_{e_i < 0} e_i, \\
\mathcal{M}^-(D^2u) &:= \inf\limits_{\frac{1}{\Lambda} I \leq M \leq \Lambda I}\operatorname{Tr}(MD^2u) = \frac{1}{\Lambda} \sum_{e_i > 0} e_i + \Lambda \sum_{e_i < 0} e_i,
\end{align*}
where $e_i$ are the eigenvalues of $D^2u$.
\end{definition}

For any uniformly elliptic operator $\mathcal{L}$ satisfying \eqref{eq:nondiv_op}, we have $\mathcal{M}^-(D^2u) \leq \mathcal{L}u \leq \mathcal{M}^+(D^2u)$. See \cite{CC95} or \cite{FR22} for further properties.

\section{Existence of infinitely many solutions}
\label{sec:existence}

This section is devoted to the proof of Theorem~\ref{th:Intro_Exist}. In fact, we establish the following more general version of it, which relaxes the growth conditions on the nonlinearity and extends the result to the broader class of operators defined in~\eqref{eq:div_problem}.

\begin{theorem}
\label{th:General_Exist}
Let $\ell\in \{0,\infty\}$, and let $f\in C(\R)$ be a function with $f(0)\geq 0$. Assume~\textnormal{(\hyperlink{hyp:potential}{H1})--(\hyperlink{hyp:convex}{H3})}. Suppose that~\eqref{eq:hyp_zeros} holds and that there exists $\{\gamma_n\}\subset \R^+$ with $\gamma_n \to \ell$ such that
    \begin{equation}
    \label{eq:hyp_general}
        \frac{F(\gamma_n)}{\gamma_n^p} \to \oL\in (0,\infty] \qquad \text{and} \qquad  -\min_{t\in [0,\gamma_n]} F(t) \leq M F(\gamma_n) \text{ for some } M \geq 0.
    \end{equation}
Then, there exists $\overline{\lambda} \in [0,\infty)$ such that, for any $\lambda>\overline{\lambda}$, problem~\eqref{eq:div_problem} has a sequence $\{u_n\}$ of nonnegative (and nontrivial) bounded solutions such that $\|u_n\|_\lio \to \ell$.

Furthermore, if $\oL=\infty$, then $\overline{\lambda}=0$.
\end{theorem}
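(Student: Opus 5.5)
The plan is to combine truncation at the zeros $\alpha_n$ with direct minimization. For each $n$, truncate $f$ at the zero $\alpha_n$: set $f_n(s)=f(0)$ for $s<0$, $f_n(s)=f(s)$ for $0\le s\le\alpha_n$, and $f_n(s)=0$ for $s>\alpha_n$. Since $f(\alpha_n)=0$, $f_n$ is continuous and bounded. Consider the functional
\[
J_n(u)=\into \Phi(x,\nabla u)\,dx-\lambda\into F_n(u)\,dx,\qquad F_n(s)=\int_0^s f_n,
\]
on $\sob$. By (H2) and the boundedness of $F_n$ on $[0,\infty)$, the functional is coercive. By (H3) and Carath\'eodory, it is weakly lower semicontinuous. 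Hence it has a global minimizer $u_n$, which is a weak solution of $-\div(\mathbf a(x,\nabla u))=\lambda f_n(u)$.

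Next I would show $0\le u_n\le\alpha_n$, so that $u_n$ solves \eqref{eq:div_problem}. Testing with $u_n^-$ gives $\int \mathbf a(x,\nabla u_n)\cdot\nabla u_n^-=-\lambda\int f(0)u_n^-\le0$, which forces $u_n^-=0$ by coercivity, using $f(0)\ge0$. Testing with $(u_n-\alpha_n)^+$ gives $\int \mathbf a\cdot\nabla(u_n-\alpha_n)^+=0$, since $f_n=0$ above $\alpha_n$, so $u_n\le\alpha_n$. This yields $\|u_n\|_\lio\le\alpha_n$.

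For $\ell=0$ this already gives decay. The work is nontriviality, together with divergence of the norms when $\ell=\infty$. Here I use the energy levels. Fix a ball $B_R(x_0)\subset\Omega$ and $\sigma\in(0,1)$, and let $w_\gamma$ be the cutoff equal to $\gamma$ on $B_{\sigma R}$, vanishing outside $B_R$, and linear in between. Choose $n$ large, and pass to subsequences so that $\gamma_k\le\alpha_n$ when $\ell=0$, or $\alpha_n\ge\gamma_k$ when $\ell=\infty$, by interlacing the two sequences. Then $F_n(w_{\gamma_k})=F(w_{\gamma_k})$. On the annulus, $F(w)\ge\min_{[0,\gamma_k]}F\ge -MF(\gamma_k)$ by \eqref{eq:hyp_general}. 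Therefore
\[
J_n(w_{\gamma_k})\le \frac{\beta}{p}\frac{\gamma_k^p}{((1-\sigma)R)^p}|B_R|-\lambda F(\gamma_k)\Big(|B_{\sigma R}|-M|B_R\setminus B_{\sigma R}|\Big).
\]
Choosing $\sigma$ close to $1$ makes the bracket a positive constant $c_\sigma$. Dividing by $\gamma_k^p$ shows $J_n(w_{\gamma_k})<0$ as soon as $\lambda\oL c_\sigma>C_\sigma$. This defines $\overline\lambda=C_\sigma/(c_\sigma\oL)$, which equals $0$ if $\oL=\infty$. Thus $J_n(u_n)<0$, so $u_n\not\equiv0$. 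Moreover, when $\ell=0$ the interlacing shows $J_n(u_n)\le J(w_{\gamma_k})<0$ with $\gamma_k\le\alpha_n$.

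The main obstacle is $\ell=\infty$: I must show that $\|u_n\|_\lio\to\infty$ rather than the minimizers stabilizing at a fixed solution. I would argue that $J_n(u_n)\le J_n(w_{\gamma_k})\le -c\,\gamma_k^p\to-\infty$ as $n$, and with it the admissible $k$, grows. On the other hand, $J_n(u_n)\ge -\lambda|\Omega|\max_{[0,\|u_n\|_\infty]}|F|$, because $\Phi\ge0$ and $0\le u_n\le\alpha_n$. So $\max_{[0,\|u_n\|_\infty]}|F|\to\infty$, which forces $\|u_n\|_\lio\to\infty$ by continuity of $F$.

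For $\ell=0$, the norms tend to $0$ because $\|u_n\|_\lio\le\alpha_n\to0$. It remains to obtain infinitely many distinct solutions, which follows since each $u_n$ is nontrivial with $\|u_n\|_\lio\to0$. Boundedness of each $u_n$ is automatic from $u_n\le\alpha_n$.
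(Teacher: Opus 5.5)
Your proof is correct. For $\ell=0$ it is essentially the paper's argument: truncate at the zero $\alpha_n$, take a global minimizer with $0\le u_n\le\alpha_n$, and use a plateau test function at height $\gamma_k\le\alpha_n$ to get negative energy. The only cosmetic differences are that you put the plateau on an interior ball rather than using the boundary layer $\Omega_\delta$. You also bound $F(w)\ge\min_{[0,\gamma_k]}F\ge -MF(\gamma_k)$ on the transition region directly, instead of going through $\oF$ and the ratio $F(\gamma_n)/\oF(\gamma_n)\ge(1+M)^{-1}$.

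For $\ell=\infty$ your route is genuinely different. The paper compares minimizers at two truncation levels:
\begin{itemize}
\item it places the plateau height $\gamma_n$ in $(\alpha_{\sigma(n)-1},\alpha_{\sigma(n)}]$;
\item it uses the auxiliary quantity $\uF(\alpha_n,\gamma_n)$ and a diagonal extraction to bound $\int_\Omega (F_{\sigma(n)}(w_n)-F_n(u_n))$ from below by a multiple of $\oF(\gamma_n)$;
\item it deduces $I_{\sigma(n)}(\lambda,u_{\sigma(n)})<I_n(\lambda,u_n)$;
\item it concludes $\|u_{\sigma(n)}\|_{L^\infty(\Omega)}>\alpha_n$, since otherwise $u_{\sigma(n)}$ would be a competitor for $I_n$ with the same energy.
\end{itemize}
You only show $J_n(u_n)\le -c\,\gamma_{k(n)}^p\to-\infty$. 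You pair this with the trivial bound $J_n(u)\ge -\lambda|\Omega|\max_{[0,\|u\|_\infty]}|F|$, which follows from $\Phi\ge 0$, so local boundedness of $F$ forces $\|u_n\|_\infty\to\infty$.

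Your version dispenses with $\uF$, the diagonal argument and the index map $\sigma$. It also gives the same threshold $\overline{\lambda}$ for both values of $\ell$, whereas the paper's constant $C_1$ degrades when $\ell=\infty$ because of the factor $\tfrac12$ from the diagonal extraction. What the paper's comparison buys in return is localization: each solution's maximum lies in a prescribed window $(\alpha_n,\alpha_{\sigma(n)}]$ between zeros of $f$. That is more information than the theorem requires.
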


\begin{remark}
Observe that $f(0)=0$ when $\ell=0$. In this case, $u\equiv 0$ is always a solution to~\eqref{eq:div_problem}.
\end{remark}

\begin{remark}
There exist functions satisfying~\eqref{eq:hyp_general} but not~\eqref{eq:hyp_infsup}. An example is $F(s)=s^{p+1}\cos s$, for which $\liminf_{s\to \infty}\frac{F(s)}{s^p}=-\infty$ but~\eqref{eq:hyp_general} holds by choosing $M=1$ and $\gamma_n=2\pi n$.
\end{remark}

Condition~\eqref{eq:hyp_general} has been widely adopted by numerous authors since it was first introduced in~\cite{AC02}. In the case $\ell=\infty$, it has usually been accompanied by a subcritical growth assumption of $f$ (see, for instance,~\cite{MP14}). The present work shows that such a growth restriction is not necessary.

On the other hand, the existence of the sequence $\{\gamma_n\}$ in~\eqref{eq:hyp_general} is always guaranteed whenever~\eqref{eq:hyp_infsup} holds, as the following lemma shows.

\begin{lemma}
Let $\ell\in \{0,\infty\}$, and let $f\in C(\R)$. Then~\eqref{eq:hyp_infsup} implies~\eqref{eq:hyp_general}.
\end{lemma}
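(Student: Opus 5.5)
We choose $\{\gamma_n\}$ directly from the definition of the limit superior and then check the two requirements of \eqref{eq:hyp_general} separately. By \eqref{eq:hyp_infsup} there is a sequence $\gamma_n\to\ell$ with $F(\gamma_n)/\gamma_n^p\to\oL\in(0,\infty]$. Since $\oL>0$, after discarding finitely many terms we may assume $F(\gamma_n)\geq c\,\gamma_n^p$ for some $c>0$ and all $n$. We can take $c=\oL/2$ if $\oL<\infty$, and $c=1$ if $\oL=\infty$. In particular $F(\gamma_n)>0$, so the first condition in \eqref{eq:hyp_general} holds.

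For the second condition we use the finite liminf. By \eqref{eq:hyp_infsup} there exist $K\geq 0$ and $s_0>0$ such that $F(s)\geq -K s^p$ for all $s$ in the relevant neighbourhood of $\ell$. If $\ell=0$ this means $s\in(0,s_0]$; if $\ell=\infty$ it means $s\geq s_0$.

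\emph{Case $\ell=0$.} We take $n$ large, so that $\gamma_n\leq s_0$. For $t\in[0,\gamma_n]$ we have either $t=0$, where $F(0)=0$, or $t\in(0,s_0]$. Hence $F(t)\geq -Kt^p\geq -K\gamma_n^p$, and so
\[
-\min_{t\in[0,\gamma_n]}F(t)\leq K\gamma_n^p\leq \tfrac{K}{c}F(\gamma_n).
\]
We may therefore take $M=K/c$.

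\emph{Case $\ell=\infty$.} Let $m_0:=\min_{[0,s_0]}F\leq 0$, which is finite by continuity. For $t\in[0,\gamma_n]$ with $t\geq s_0$ we get $F(t)\geq -K\gamma_n^p$, while for $t\leq s_0$ we get $F(t)\geq m_0$. Consequently
\[
-\min_{[0,\gamma_n]}F\leq K\gamma_n^p+|m_0|.
\]
Because $\gamma_n\to\infty$, for $n$ large we have $|m_0|\leq \gamma_n^p$. This gives $-\min_{[0,\gamma_n]}F\leq (K+1)\gamma_n^p\leq \frac{K+1}{c}F(\gamma_n)$.

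Discarding the finitely many initial indices does not affect $\gamma_n\to\ell$ or the value of the limit, so \eqref{eq:hyp_general} follows. The argument is routine. The only point needing care is handling the compact part $[0,s_0]$ when $\ell=\infty$, and the positivity of $\oL$ absorbs it through the $\gamma_n^p$ growth.
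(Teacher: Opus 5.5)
Your proof is correct and follows essentially the same route as the paper: you choose $\gamma_n$ realizing the limsup, bound $F(\gamma_n)$ below by $\min\{\oL/2,1\}\gamma_n^p$, and use the finite liminf to get $F(s)\geq -Ks^p$ near $\ell$, absorbing the compact part into $\gamma_n^p$ when $\ell=\infty$ (the paper instead uses a global bound $F(s)\geq -M_1s^p-M_2$ together with $\gamma_n\geq 1$). One cosmetic point: the first condition in \eqref{eq:hyp_general} holds by the choice of $\gamma_n$, not as a consequence of $F(\gamma_n)>0$.
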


\begin{proof}
Since $\limsup_{s\to \ell^+} \frac{F(s)}{s^p} = \oL \in (0,\infty]$, there exists a sequence $\{\gamma_n\} \subset \R^+$ with $\gamma_n \to \ell$ such that
\begin{equation}
\label{eq:Pf_equiv}
\frac{F(\gamma_n)}{\gamma_n^p} \to \oL.
\end{equation}
By~\eqref{eq:Pf_equiv}, it follows that $F(\gamma_n) > \min\{\oL/2,1\} \gamma_n^p$ when $n$ is large.

Assume first that $\ell=0$. Since $\liminf_{s\to 0^+} \frac{F(s)}{s^p} > -\infty$, there exist $M,\delta>0$ such that $F(s)\geq -Ms^p$ for any $s\in [0,\delta]$. For $n$ large, $\gamma_n$ belongs to $[0,\delta]$ and thus $\min_{t\in [0,\gamma_n]} F(t) \geq -M\gamma_n^p$. Then, for $n$ large, it holds
\[
\min_{t\in [0,\gamma_n]} F(t) \geq - M \min\{\oL/2,1\}^{-1} F(\gamma_n).
\]

Suppose now that $\ell=\infty$. We may assume that $\gamma_n\geq 1$ for every $n$. Since $\liminf_{s\to \infty} \frac{F(s)}{s^p} > -\infty$ and $F$ is bounded in compact sets, there exist $M_1,M_2>0$ such that $F(s)\geq -M_1s^p - M_2$ for any $s\geq 0$. Therefore, $\min_{t\in[0,\gamma_n]} F(t)\geq -M_1\gamma_n^p - M_2 \geq -(M_1+M_2)\gamma_n^p$ and thus we conclude that, for $n$ large,
\[
\min_{t\in[0,\gamma_n]} F(t)\geq -(M_1+M_2) \min\{\oL/2,1\}^{-1} F(\gamma_n).
\]

This ends the proof.
\end{proof}

Our approach to show Theorem~\ref{th:General_Exist} relies on a suitable truncation of the problem combined with a variational argument. For $n \in \N$, we define the truncated nonlinearity
\begin{equation*}
f_n(s) = \begin{cases}
f(0) & \text{if } s<0,\\
f(s) & \text{if } 0\leq s\leq \alpha_{n},\\
0  & \text{if } s>\alpha_{n},
\end{cases}
\end{equation*}
and we set $F_n(s) :=\int_0^s f_n(t)\, \mathrm{d}t$. Since $f(\alpha_n) = 0$, $f_n$ is continuous. 

We associate the energy functional $I_n$ to $f_n$, and show that it has a global minimizer.

\begin{lemma}\label{lem:minimizer}
Assume $f(0)\geq 0$ and \textnormal{(\hyperlink{hyp:potential}{H1})--(\hyperlink{hyp:convex}{H3})}. Then, for each $n\in\N$ and $\lambda > 0$, the functional
\[
I_n(\lambda,u) = \into \Phi(x,\nabla u) - \lambda \into F_n(u),\quad u\in\sob,
\]
attains its global minimum at some $u_n \in \sob$. Moreover, $u_n$ is a weak solution to~\eqref{eq:div_problem} with $f$ replaced by $f_n$, and $0\leq u_n\leq\alpha_{n}$.
\end{lemma}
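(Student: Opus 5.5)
The plan is to use the direct method of the calculus of variations on $\sob$, and then obtain the bounds $0\leq u_n\leq \alpha_n$ by comparison with truncations. Throughout, $n$ and $\lambda>0$ are fixed.

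\emph{Step 1: coercivity and weak lower semicontinuity.}
Since $f_n$ is continuous, it is bounded on $[0,\alpha_n]$. It equals $f(0)\geq 0$ on $(-\infty,0)$ and vanishes on $(\alpha_n,\infty)$. Hence $F_n(s)=f(0)s\leq 0$ for $s<0$, and $F_n$ is constant for $s>\alpha_n$. It follows that $F_n$ is bounded above by some constant $C_n$, so
\[
\into F_n(u)\leq C_n|\Omega|.
\]
Together with the lower bound in (\hyperlink{hyp:growth}{H2}) this gives
\[
I_n(\lambda,u)\geq \frac{\alpha}{p}\|\nabla u\|_p^p-\lambda C_n|\Omega|,
\]
so $I_n(\lambda,\cdot)$ is coercive and bounded below.

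For weak lower semicontinuity:
\begin{itemize}
\item The map $u\mapsto\into\Phi(x,\nabla u)$ is convex by (\hyperlink{hyp:convex}{H3}) and strongly continuous on $\sob$, by the growth bounds (\hyperlink{hyp:growth}{H2}) and the Carath\'eodory property. It is therefore weakly lower semicontinuous.
\item The function $F_n$ is Lipschitz, since $|F_n(s)-F_n(t)|\leq \|f_n\|_\infty|s-t|$. Hence $u\mapsto\into F_n(u)$ is continuous in $L^1(\Omega)$.
\item By the compact embedding of $\sob$ into $L^p(\Omega)$, and hence into $L^1(\Omega)$ since $\Omega$ is bounded, this term is weakly continuous.
\end{itemize}
Taking a minimizing sequence, which is bounded by coercivity, and extracting a weakly convergent subsequence yields a global minimizer $u_n$.

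\emph{Step 2: Euler--Lagrange equation.}
I would show that $I_n(\lambda,\cdot)$ is G\^ateaux differentiable with derivative
\[
\into \mathbf{a}(x,\nabla u)\cdot\nabla v-\lambda\into f_n(u)v.
\]
For the first term, differentiate $\Phi(x,\nabla u+t\nabla v)$ in $t$. Dominated convergence applies because $|\mathbf{a}(x,\xi)|\leq C|\xi|^{p-1}$ and by the mean value theorem. The second term is straightforward, since $f_n$ is bounded and continuous. Setting the derivative equal to zero at $u_n$ shows that $u_n$ is a weak solution of the truncated problem.

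\emph{Step 3: the bounds $0\leq u_n\leq\alpha_n$ (the main point).}
I expect this to be the main obstacle. Rather than testing the equation, I would compare energies.

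\emph{Lower bound.} Let $w=u_n^+$, so that $\nabla w=\nabla u_n\chi_{\{u_n>0\}}$. Since $\Phi(x,0)=0$ and $\Phi\geq 0$, we have $\Phi(x,\nabla w)\leq\Phi(x,\nabla u_n)$ pointwise. Moreover, on $\{u_n<0\}$,
\[
F_n(u_n)=f(0)u_n\leq 0=F_n(0).
\]
Therefore $I_n(\lambda,w)\leq I_n(\lambda,u_n)$, and $w$ is also a minimizer. This alone does not give $u_n\geq 0$, so I would argue more carefully: the difference $I_n(\lambda,u_n)-I_n(\lambda,w)$ is at least
\[
\int_{\{u_n<0\}}\Phi(x,\nabla u_n)\geq \frac{\alpha}{p}\int|\nabla u_n^-|^p,
\]
using $-\lambda F_n(u_n)\geq 0$ there. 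Minimality of $u_n$ forces this quantity to vanish, so $u_n^-=0$.

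\emph{Upper bound.} Similarly, let $w=\min\{u_n,\alpha_n\}$. Since $F_n$ is constant on $[\alpha_n,\infty)$, we get $F_n(w)=F_n(u_n)$. The gradient term, however, drops by at least
\[
\frac{\alpha}{p}\int|\nabla(u_n-\alpha_n)^+|^p.
\]
Minimality again forces $(u_n-\alpha_n)^+=0$.

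This energy-comparison argument avoids needing a comparison principle for the operator $\mathbf{a}$. It uses only that $\Phi\geq0$ and $\Phi(x,0)=0$, together with the sign structure of the truncation $f_n$.
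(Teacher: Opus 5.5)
Your proposal is correct and follows essentially the same route as the paper: the direct method (coercivity plus weak lower semicontinuity from convexity and compactness), the Euler--Lagrange equation from (H1), and the bounds $0\leq u_n\leq\alpha_n$ obtained by comparing the energy of $u_n$ with that of $u_n^+$ and of $\min\{u_n,\alpha_n\}$. The differences are only in the details. You get coercivity from $\sup F_n<\infty$ rather than from H\"older--Sobolev, and lower semicontinuity via Mazur rather than the tangent inequality. You also make explicit the strict energy drop that the paper's phrase ``minimality ensures'' leaves implicit; note that this step uses the lower bound in (H2), not only $\Phi\geq 0$ and $\Phi(x,0)=0$ as your closing sentence claims.
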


\begin{proof}
Coercivity follows from (\hyperlink{hyp:growth}{H2}) and the boundedness of $f_n$. Indeed, using that $\Phi(x,\xi) \geq \frac{\alpha}{p}|\xi|^p$ and $|F_n(s)| \leq \|f_n\|_{L^\infty(\R)} |s|$, an application of H\"older and Sobolev inequalities yields, for some constant $C>0$, that
\[
I_n(\lambda, u) \geq \frac{\alpha}{p} \|\nabla u\|_{L^p(\Omega)}^p - \lambda C \|f_n\|_{L^\infty(\R)} \|\nabla u\|_{L^p(\Omega)}.
\]
Weak lower semicontinuity follows from the strict convexity of $\Phi$ in $\xi$. Note that given $u_m\rightharpoonup u$ in $\sob$, assumption (\hyperlink{hyp:convex}{H3}) implies that
\[
\into \Phi(x,\nabla u_m) \geq \into \Phi(x,\nabla u) + \into \nabla_\xi \Phi(x,\nabla u) \cdot (\nabla u_m - \nabla u)
\]
and then $\liminf_{m\to\infty} I_n(\lambda,u_m) \geq I_n(\lambda,u)$. Therefore, $I_n(\lambda, \cdot)$ attains its minimum at some $u_n \in \sob$.

By (\hyperlink{hyp:potential}{H1}), the minimizer satisfies the Euler--Lagrange equation $-\div(\mathbf{a}(x,\nabla u_n)) = \lambda f_n(u_n)$. Hence, $u_n$ is a weak solution to~\eqref{eq:div_problem} with $f$ replaced by $f_n$. Since $F_n(s)\leq F_n(0)$ for $s\leq 0$ and $F_n(s)$ is constant for $s > \alpha_n$, minimality ensures $0\leq u_n\leq\alpha_{n}$.
\end{proof}

The proof of Theorem~\ref{th:General_Exist} differs substantially depending on whether $\ell=0$ or $\ell=\infty$. For this reason, we divide its proof into two distinct sections: we first establish the case $\ell=0$ and then we address the case $\ell=\infty$.

\subsection{Infinitely many small solutions}

Here we establish Theorem~\ref{th:General_Exist} when $\ell=0$. Since $f(0)=0$, the trivial function $u\equiv 0$ always solves~\eqref{eq:div_problem}. In the following, we show that the solutions $u_n$ found in Lemma~\ref{lem:minimizer} are nontrivial whenever $\lambda$ is greater than some $\overline{\lambda}\geq 0$.

\begin{proof}[Proof of Theorem~\ref{th:General_Exist} (Case $\ell=0$)]
Without loss of generality, we may assume that both $\{\alpha_n\}$ and $\{\gamma_n\}$ are decreasing sequences converging to zero, with $\gamma_n \leq \alpha_n$ for all $n\in\N$. Furthermore, we can assume that $F(\gamma_n)>0$ for all $n\in\N$.

Let $\oF$ be the function defined in~\eqref{eq:def_ov_F}. Since $F\leq \oF$, passing to a subsequence if necessary, by~\eqref{eq:hyp_general} it follows that
\begin{equation}
\label{eq:Pf_small_1}
\frac{\oF(\gamma_n)}{\gamma_n^p} \to K\in (0,\infty],
\end{equation}
with $K\geq \oL$. Furthermore, by assumption~\eqref{eq:hyp_general} it also holds that
\begin{equation}
\label{eq:Pf_small_2}
\frac{F(\gamma_n)}{\oF(\gamma_n)} = \frac{F(\gamma_n)}{F(\gamma_n)- \min_{t\in[0,\gamma_n]} F(t)} \geq \frac{F(\gamma_n)}{F(\gamma_n) + M F(\gamma_n)} = \frac{1}{1+M},\quad \forall n\in\N.
\end{equation}

According to Lemma~\ref{lem:minimizer}, for each $n \in \N$, the functional $I_n(\lambda, \cdot)$ attains its global minimum at some $u_n \in \sob$ with $0 \leq u_n \leq \alpha_n$. To show that $u_n$ is nontrivial for a specific range of $\lambda$, we proceed as follows. For a sufficiently small $\delta > 0$, we denote $\Omega_{\delta} := \{ x \in \Omega : \operatorname{dist}(x, \partial \Omega) < \delta \}$ and, for each $n\in\N$, we define the functions
\begin{equation}
\label{eq:Pf_small_2bis}
w_{n}(x) := \gamma_n \min \left\{ 1, \frac{\operatorname{dist}(x, \partial \Omega)}{\delta} \right\}.
\end{equation}
Note that $w_n\in W_0^{1,p}(\Omega)$ is such that $w_n\equiv \gamma_n$ in $\Omega\setminus\Omega_\delta$ and $\|w_n\|_\lio=\gamma_n \leq \alpha_n$. Using the definition of $\oF$ and~\eqref{eq:Pf_small_2}, it follows
\begin{equation}
\label{eq:Pf_small_3}
\begin{split}
\int_{\Omega} F_n (w_n) &=
\int_{\Omega \setminus \Omega_{\delta}} F (w_n) +
\int_{\Omega_{\delta}} F(w_n) =
\int_{\Omega} F(\gamma_n) - \int_{\Omega_\delta} \left( F(\gamma_n)  - F(w_n) \right)
\\
&\geq F(\gamma_n) |\Omega| - \oF(\gamma_n) |\Omega_\delta| \geq \oF(\gamma_n) \left(\frac{1}{1+M} |\Omega| -|\Omega_\delta| \right),\quad \forall n\in\na.
\end{split}
\end{equation}
At this point, we fix $\delta>0$ sufficiently small so that $C_1:= \frac{1}{1+M} |\Omega| -|\Omega_\delta| > 0$. Taking into account the growth condition~(\hyperlink{hyp:growth}{H2}), the equality $\|\nabla w_n\|_{L^\infty(\Omega)} = \frac{\gamma_n}{\delta}$, and the inequality~\eqref{eq:Pf_small_3}, we deduce that
\[
I_n(\lambda,w_n) = \into \Phi(x,\nabla w_n) - \lambda \int_{\Omega} F_n (w_n) \leq \frac{\beta}{p} \frac{|\Omega|}{\delta^p} \gamma_n^p -\lambda C_1 \oF(\gamma_n) =: C_2 \gamma_n^p - \lambda C_1 \oF(\gamma_n),\quad \forall n\in\N.
\]
Taking $\lambda_n = \frac{C_2}{C_1} \gamma_n^p\oF(\gamma_n)^{-1}$, we obtain that $I_n(\lambda, w_n) < 0$ for all $\lambda > \lambda_n$ and hence $u_n$ is nontrivial whenever $\lambda > \lambda_n$.

Finally, we set $\overline{\lambda} := \lim \lambda_n$, which is guaranteed to be nonnegative and finite by~\eqref{eq:Pf_small_1}. Moreover, it follows from~\eqref{eq:Pf_small_1} that $\overline{\lambda}=0$ if $\oL=\infty$. For each $\lambda > \overline{\lambda}$, there is some $n_0\in\N$ such that $\lambda_n<\lambda$ for any $n\geq n_0$. Hence, $\{u_n\}_{n\geq n_0}$ is a sequence of nonnegative (and nontrivial) solutions to~\eqref{eq:div_problem} such that $\|u_n\|_\lio \leq \alpha_n$ for each $n\geq n_0$. Since $\alpha_n\to 0$, we conclude that $\|u_n\|_\lio \to 0$.
\end{proof}

\subsection{Infinitely many large solutions}

Now we establish Theorem~\ref{th:General_Exist} for the case $\ell=\infty$. In what follows, we show that the sequence of solutions $\{u_n\}$ obtained in Lemma~\ref{lem:minimizer} is unbounded in $L^\infty(\Omega)$ whenever $\lambda > \overline{\lambda}$ for some $\overline{\lambda}\geq 0$.

\begin{proof}[Proof of Theorem~\ref{th:General_Exist} (Case $\ell=\infty$)]

Without loss of generality, we may assume that both $\{\alpha_n\}$ and $\{\gamma_n\}$ are increasing sequences that diverge, with $\alpha_n < \gamma_n$. Moreover, we may also suppose that $F(\gamma_n) > 0$ for every $n\in\N$.

Let $\oF$ be the function defined in~\eqref{eq:def_ov_F}. Since $F\leq \oF$, passing to a subsequence if necessary, it follows from~\eqref{eq:hyp_general} that
\begin{equation}
\label{eq:Pf_large_1}
\frac{\oF(\gamma_n)}{\gamma_n^p} \to K \geq \oL > 0.
\end{equation}

Consider now the function $\uF$ defined, for all $0 \leq s_1 \leq s_2$, by
\begin{equation*}
    \uF(s_1,s_2) := \min \left\{ \int_t^{s_2} f(\rho)\, \mathrm{d}\rho : 0\leq t\leq s_1 \right\}.
\end{equation*}
Note that the identity $\uF(s_1,s_2) = F(s_2)- F(s_1) + \uF(s_1,s_1)$ holds. For each fixed $k\in\N$, it follows from~\eqref{eq:hyp_general} that
\begin{equation}
\label{eq:Pf_large_2} 
\frac{\uF(\alpha_k,\gamma_n)}{\oF(\gamma_n)} = \frac{F(\gamma_n) - F(\alpha_k) + \uF(\alpha_k,\alpha_k)}{F(\gamma_n) - \min_{t\in [0,\gamma_n]} F(t)} \geq \frac{F(\gamma_n) - F(\alpha_k) + \uF(\alpha_k,\alpha_k)}{F(\gamma_n) + M F(\gamma_n)},\quad \forall n\geq n_k,
\end{equation}
where $n_k\in\N$ is such that $\uF(\alpha_k,\gamma_n) \geq 0$ for all $n\geq n_k$; note that $F(\gamma_n)\to\infty$. Since the right-hand side of~\eqref{eq:Pf_large_2} converges to $(1+M)^{-1}$ as $n\to \infty$, a diagonal argument yields a subsequence of $\{\gamma_n\}$ (not relabelled) such that
\begin{equation}
\label{eq:Pf_large_3} 
\frac{\uF(\alpha_n, \gamma_n)}{\oF(\gamma_n)} \geq \frac{1}{2(1+M)},\quad \forall n\in \N.
\end{equation}

Regarding $\{\gamma_n\}$, we may also assume that between any two consecutive elements of $\{\alpha_n\}$ there is at most one element of $\{\gamma_n\}$. Then, there exists a subsequence $\{\alpha_{\sigma(n)}\}$ of $\{\alpha_n\}$ such that $\alpha_{\sigma(n)-1} < \gamma_n \leq \alpha_{\sigma(n)}$ for every $n$. Finally, it can also be supposed that $\alpha_n < \alpha_{\sigma(n)}$ for every $n$.

By Lemma~\ref{lem:minimizer}, for each $n\in\N$ there exists a minimizer $u_n \in \sob$ of $I_n(\lambda, \cdot)$ such that $0\leq u_n \leq\alpha_{n}$. Our aim is to show that, for each $n\in\N$, there exists $\lambda_n>0$ and $w_n\in \sob$ with $\|w_n\|_\lio\leq \alpha_{\sigma(n)}$ such that
\begin{equation}
\label{eq:Pf_large_4}
I_{\sigma(n)}(\lambda, w_n) < I_n(\lambda,u_n),\quad \forall \lambda>\lambda_n.
\end{equation}
Since $u_{\sigma(n)}$ minimizes $I_{\sigma(n)}(\lambda, \cdot)$, it follows that
\[
I_{\sigma(n)}(\lambda,u_{\sigma(n)}) \leq I_{\sigma(n)}(\lambda,w_n) < I_n(\lambda, u_n ),\quad \forall \lambda>\lambda_n,
\]
and then $\|u_{\sigma(n)} \|_{\lio} \in (\alpha_n, \alpha_{\sigma(n)}]$; otherwise, $I_{\sigma(n)} (\lambda, u_{\sigma(n)}) = I_n ( \lambda, u_{\sigma(n)} )$, contradicting the minimality of $u_n$ for $I_{n}(\lambda, \cdot)$.

To prove~\eqref{eq:Pf_large_4}, take $\delta>0$ and let $w_n\in \sob$ be as in~\eqref{eq:Pf_small_2bis}. Using the definitions of $\oF$ and $\uF$, inequality~\eqref{eq:Pf_large_3} and that $0\leq u_n \leq \alpha_n$, we deduce
\begin{equation}
\label{eq:Pf_large_5}
\begin{split}
\int_{\Omega} \left( F_{\sigma(n)} (w_n) - F_n(u_n) \right) &=
\int_{\Omega \setminus \Omega_{\delta}} F (w_n) +
\int_{\Omega_{\delta}} F(w_n) - \int_{\Omega} F (u_n)
\\
&=
\int_{\Omega} \left( F(\gamma_n) - F(u_n) \right) - \int_{\Omega_\delta} \left( F(\gamma_n)  - F(w_n) \right)
\\
&\geq \uF(\alpha_n, \gamma_n) |\Omega| - \oF(\gamma_n) |\Omega_\delta| \geq \oF(\gamma_n) \left(\frac{1}{2(1+M)} |\Omega| -|\Omega_\delta| \right),\quad \forall n\in\N.
\end{split}
\end{equation}
Now, we fix $\delta>0$ small such that $C_1:= \frac{1}{2(1+M)} |\Omega| -|\Omega_\delta| >0$. Taking into account the growth condition~(\hyperlink{hyp:growth}{H2}), the equality $\|\nabla w_n\|_{L^\infty(\Omega)} = \frac{\gamma_n}{\delta}$, and the inequality~\eqref{eq:Pf_large_5}, it follows that
\begin{align*}
I_{\sigma(n)}(\lambda,w_n) - I_n ( \lambda, u_n) &= \into \Phi(x,\nabla w_n) - \into \Phi (x,\nabla u_n ) - \lambda \int_{\Omega} \left( F_{\sigma(n)} (w_n) - F_n (u_n) \right)
\\
&\leq \into \Phi(x,\nabla w_n) - \lambda C_1 \oF(\gamma_n) \leq \frac{\beta}{p} \frac{|\Omega|}{\delta^p} \gamma_n^p - \lambda C_1 \oF(\gamma_n) =: C_2 \gamma_n^p - \lambda C_1 \oF(\gamma_n),\quad \forall n\in\N.
\end{align*}
Taking $\lambda_n := \frac{C_2}{C_1} \gamma_n^p \oF(\gamma_n)^{-1}$, we obtain~\eqref{eq:Pf_large_4}. Then, we have shown that $\|u_{\sigma(n)} \|_{\lio} \in (\alpha_n, \alpha_{\sigma(n)}]$ whenever $\lambda> \lambda_n$.

Finally, we define $\overline{\lambda} := \lim \lambda_n$, which is nonnegative and finite by~\eqref{eq:Pf_large_1}. Furthermore, it follows from~\eqref{eq:Pf_large_1} that $\overline{\lambda}=0$ if $\oL=\infty$. For each $\lambda > \overline{\lambda}$, there is some $n_0\in\N$ such that $\lambda_n<\lambda$ for any $n\geq n_0$. Hence, $\{u_{\sigma(n)}\}_{n\geq n_0}$ is a sequence of nonnegative (and nontrivial) bounded solutions to~\eqref{eq:div_problem} such that $\|u_{\sigma(n)}\|_\lio > \alpha_n$ for each $n\geq n_0$. Since $\alpha_n\to \infty$, we conclude that $\|u_{\sigma(n)}\|_\lio \to \infty$.
\end{proof}

\section{Nonexistence of bifurcation points}
\label{sec:nonexistence}

In this section, we establish results complementary to those in Section~\ref{sec:existence}. Our approach, partially inspired by~\cite{DS87}, relies on the construction of radial solutions. To handle the ordinary differential equation (ODE) satisfied by these solutions, we consider two classes of operators: the $p$-Laplacian and uniformly elliptic operators in non-divergence form.

First, we address the $p$-Laplacian operator. In what follows, we provide the proof of Theorem~\ref{th:Intro_NonExist}.

\begin{proof}[Proof of Theorem~\ref{th:Intro_NonExist}]
We carry out the proof for the cases $\ell=0$ and $\ell=\infty$ simultaneously. Without loss of generality, we may assume that $\{\alpha_n\}$ is decreasing if $\ell=0$ and increasing if $\ell=\infty$. We first establish our claim under the assumption $f(0)\geq 0$, and then we address the case $f(0)<0$. We point out that, by continuity, in the case $\ell=0$ it always holds that $f(0)=0$.
\smallskip

\textbf{Case $f(0)\geq 0$.}

Arguing by contradiction, suppose that~\eqref{eq:Pb_pLap} has a bifurcation point from $\ell$, denoted by $\tilde\lambda \geq 0$. Then there exists a sequence of pairs $(\lambda_n, u_n) \in \R^+\times \lio$, where each $u_n\geq 0$ is a nontrivial solution to~\eqref{eq:Pb_pLap}, such that $\lambda_n$ converges to $\tilde\lambda$ and $\|u_n\|_\lio\to \ell$.

We may assume that between two consecutive zeros of $f$ there is at most one element of $\{\|u_n\|_\lio\}$. Passing to a further subsequence (not relabelled), we can also assume that $\|u_n\|_\lio$ belongs to the interval $(\min\{\alpha_{n-1}, \alpha_{n+1}\}, \alpha_n]$.

Let $B_R$ be a ball of radius $R>0$ centred at $0$ containing $\Omega$. We denote by $\underline u_n$ the extension of $u_n$ by zero to $B_R\setminus\Omega$. The function $\underline u_n$ is a subsolution to
\begin{equation}
	\label{eq:PbBall}
	\begin{cases}
		-\Delta_p v = \lambda f(v) & \text{in } B_R,\\
		v=0 & \text{on } \partial B_R,
	\end{cases}
\end{equation}
with $\lambda=\lambda_n$, since $f(0)\geq 0$. The constant $\alpha_{n}$ is a supersolution. By the sub-supersolution method (see~\cite[Theorems~1.3--1.4 and Remark~1.5]{LS06}), there exists a solution $v_n$ to~\eqref{eq:PbBall} with $\lambda=\lambda_{n}$ that is maximal among the solutions contained in the interval $[0,\alpha_n]$ and satisfies $\underline u_n \leq v_n \leq \alpha_n$. Since the \mbox{$p$-Laplacian} is rotation invariant, any rotation of $v_n$ is also a solution. The maximality of $v_n$ then implies that it must be radially symmetric. Note that $v_n(0)=\|v_n\|_{L^\infty(B_R)} \in (\min\{\alpha_{n-1}, \alpha_{n+1}\}, \alpha_n]$. In particular, $v_n(0)\to \ell$ as $n \to \infty$.

Denoting $r=|x|$, the function $v_n(r)$ solves
\begin{equation}
	\label{eq:PbBall2}
	\begin{cases}
		-\left( |v_n'|^{p-2} v_n' \right)' - \frac{N-1}{r} |v_n'|^{p-2} v_n' = \lambda_{n} f(v_n), \quad r\in (0,R),\\
		v_n(R)= v_n'(0) =0.
	\end{cases}
\end{equation}
Multiplying~\eqref{eq:PbBall2} by $-v_n'$ and integrating from $0$ to $r$, one gets
\begin{equation}
\label{eq:Pf_Th_Sub_2}
\frac{p-1}{p} |v_n'(r)|^p + (N-1) \int_0^r \frac{|v_n'(t)|^p}{t} \, \mathrm{d}t = \lambda_n \int_{v_n(r)}^{v_n(0)} f(t)\, \mathrm{d}t.
\end{equation}

Equality~\eqref{eq:Pf_Th_Sub_2} with $r=R$ shows that if $v_n$ is a solution to~\eqref{eq:PbBall}, then $F(v_n(0))\geq 0$, since the left-hand side is nonnegative. Now, if $\oL<0$, then $F(s)<0$ for all $s$ in an interval $I$, where $I=(0,s_0)$ if $\ell=0$ or $I=(s_0,\infty)$ if $\ell=\infty$. Therefore, no solution to~\eqref{eq:Pb_pLap} can have its maximum in $I$, and no $\lambda\geq 0$ is a bifurcation point from $\ell$.

Now we study the case $\oL\geq 0$. Dropping the nonnegative integral term in~\eqref{eq:Pf_Th_Sub_2} and using the definition of $\oF$ (see~\eqref{eq:def_ov_F}), it follows
\[
\frac{p-1}{p} |v_n'(r)|^p \leq \lambda_n \oF(v_n(0)).
\]
By the Mean Value Theorem, there exists $r_n\in (0,R)$ with $v_n'(r_n) = -\frac{v_n(0)}{R}$. Evaluating at $r=r_n$, we obtain
\begin{equation}
\label{eq:Pf_Th_Sub_1}
\frac{p-1}{pR^p} \leq \lambda_n \frac{\oF(v_n(0))}{v_n(0)^p}.
\end{equation}
Taking into account that
\[
\limsup_{s\to \ell^+} \frac{\oF(s)}{s^p} = \limsup_{s\to \ell^+} \frac{F(s)-\min_{t\in[0,s]} F(t)}{s^p} \leq \limsup_{s\to \ell^+} \frac{F(s)}{s^p} - \liminf_{s\to \ell^+} \frac{\min_{t\in[0,s]} F(t)}{s^p} = \oL - \min\{0,\uL\},
\]
we can pass to the limit in~\eqref{eq:Pf_Th_Sub_1} to deduce that
\begin{equation*}
\frac{p-1}{pR^p} \leq \tilde\lambda \limsup_{n\to\infty} \frac{\oF(v_n(0))}{v_n(0)^p} \leq \tilde\lambda (\oL - \min\{0,\uL\}).
\end{equation*}
Since $\tilde\lambda$ is an arbitrary bifurcation point, we conclude that no bifurcation point from $\ell$ exists for~\eqref{eq:Pb_pLap} in the interval $[0,\underline\lambda)$, where 
\[
\underline{\lambda} :=  \begin{cases}
\frac{p-1}{pR^p (\oL - \min\{0,\uL\})} & \text{if } \oL > 0 \text{ or } \uL < \oL, \\
\infty & \text{if } \uL=\oL = 0.
\end{cases}
\]

\textbf{Case $f(0)<0$.}

As previously noted, this case can only occur when $\ell=\infty$, where $\{\alpha_n\}$ is increasing.
Define $g\in C(\R)$ as $g(s) = f^+(s)$ for $s\in [0,\alpha_1]$, and $g(s) = f(s)$ for $s\geq \alpha_1$. Define $G$ analogously to $F$. It holds that
\[
\liminf_{s\to \infty} \frac{G(s)}{s^p} = \liminf_{s\to \infty} \frac{F(s)}{s^p} \qquad \text{and} \qquad \limsup_{s\to \infty} \frac{G(s)}{s^p} = \limsup_{s\to \infty} \frac{F(s)}{s^p}.
\]
Any nonnegative solution to~\eqref{eq:Pb_pLap} is a subsolution to $-\Delta_p v = \lambda g(v)$, and the constants $\alpha_n$ are supersolutions. By the sub-supersolution method, any bifurcation point from infinity for~\eqref{eq:Pb_pLap} is also one for the $g$-problem. Applying the previous case to the $g$-problem, we obtain our claim for~\eqref{eq:Pb_pLap}.
\end{proof}

\begin{remark} 
Combining equality~\eqref{eq:Pf_Th_Sub_2} with the arguments in~\cite[Section~3]{DS87}, we can deduce that any bounded solution $u\geq 0$ to~\eqref{eq:Pb_pLap} must satisfy the area condition
\[
\int_s^{\|u\|_\lio} f(t)\, \mathrm{d}t >0,\ \forall s\in [0,\|u\|_\lio).
\]
This phenomenon was first observed in~\cite{CS87, DS87} for the Laplacian operator.
\end{remark}

We now establish a similar result for the operators in non-divergence form defined in~\eqref{eq:nondiv_op}. As we show, condition~\eqref{eq:hyp_infsup2} is replaced by a generalized version involving the ellipticity constant $\Lambda$. Moreover, to apply the symmetrization results of~\cite{DS07}, we require $f$ to be locally Lipschitz.

\begin{theorem}
Let $\ell\in\{0,\infty\}$, and let $f$ be a locally Lipschitz function. Assume that $\mathcal{L}$ satisfies 
\eqref{eq:nondiv_op}. Suppose that~\eqref{eq:hyp_zeros} holds and that $F_\Lambda$, as defined in~\eqref{eq:def_F_Lambda}, satisfies
\begin{equation}
\label{eq:hyp_infsup_nondiv}
    -\infty < \uLL = \liminf_{s\to \ell^+} \frac{F_\Lambda(s)}{s^2} 
    \leq \limsup_{s\to \ell^+} \frac{F_\Lambda(s)}{s^2} = \oLL < \infty.
\end{equation}
Then, there exists $\underline{\lambda}\in (0,\infty]$ such that 
no $\lambda\in [0, \underline{\lambda})$ is a bifurcation point 
from $\ell$ for problem~\eqref{eq:nondiv_problem}.

Moreover, if $\oLL < 0$ or $\uLL = \oLL = 0$, then $\underline{\lambda}=\infty$.
\end{theorem}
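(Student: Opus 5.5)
We follow the same scheme as in the proof of Theorem~\ref{th:Intro_NonExist}: comparison with a radial problem on a ball, followed by an ODE energy identity, now written with Pucci operators. We argue by contradiction and suppose that $\tilde\lambda\ge0$ is a bifurcation point from $\ell$. Then there are pairs $(\lambda_n,u_n)$ with $\lambda_n\to\tilde\lambda$, $u_n\ge0$ nontrivial solutions of~\eqref{eq:nondiv_problem}, and $\|u_n\|_\lio\to\ell$. After passing to a subsequence, $\|u_n\|_\lio\in(\min\{\alpha_{n-1},\alpha_{n+1}\},\alpha_n]$.

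\textbf{Reduction to a radial problem.} First assume $f(0)\ge0$; the case $f(0)<0$ (which forces $\ell=\infty$) is handled exactly as before, by replacing $f$ with $g=f^+$ on $[0,\alpha_1]$. This replacement changes $F_\Lambda$ only by a constant at infinity, so the quantities $\uLL,\oLL$ are unchanged. Since $\mathcal{M}^+(D^2u)\ge\mathcal{L}u$, the extension $\underline u_n$ of $u_n$ by zero to a ball $B_R\supset\Omega$ is a viscosity subsolution of $-\mathcal{M}^+(D^2v)=\lambda_n f(v)$ in $B_R$; on $\partial\Omega$ we use $f(0)\ge0$ and the fact that a maximum of two subsolutions is a subsolution. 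The constant $\alpha_n$ is a supersolution. Perron's method then gives a maximal solution $v_n$ of the Pucci problem in $[\underline u_n,\alpha_n]$. Because $\mathcal{M}^+$ is rotation invariant, maximality makes $v_n$ radial; alternatively, the symmetry results of~\cite{DS07}, which use that $f$ is locally Lipschitz, show that any positive solution is radial and decreasing. Consequently $v_n(0)\in(\min\{\alpha_{n-1},\alpha_{n+1}\},\alpha_n]$, and $v_n(0)\to\ell$.

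\textbf{ODE estimate (the main step).} For a radial decreasing $v$, the eigenvalues of $D^2v$ are $v''$ and $v'/r\le0$ (with multiplicity $N-1$). The equation therefore reads
\[
-\bigl(\Lambda (v'')^+ - \Lambda^{-1}(v'')^-\bigr) - \Lambda^{-1}\tfrac{N-1}{r}v' = \lambda_n f(v).
\]
The key task is to bound $|v'|^2$ by $\lambda_n\overline F_\Lambda(v(0))$ up to a constant. Multiply by $-v'\ge0$. On sets where $v''\ge0$ we get $\frac{\Lambda}{2}((v')^2)'\le\lambda_n f(v)(-v')$, and on sets where $v''<0$ we get $\frac{1}{2\Lambda}((v')^2)'\le\lambda_n f(v)(-v')$, because the first-order term has a favourable sign. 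Hence, with $E=\frac12(v')^2$,
\[
E'\le \lambda_n\bigl(\Lambda f^+(v)-\Lambda^{-1}f^-(v)\bigr)(-v')
\]
wherever $E$ increases or decreases; the case split by the sign of $E'$ chooses which coefficient applies, and this is where the factor $\Lambda^{-2}$ in $F_\Lambda$ arises. One has to check carefully that dividing by the correct constant in each regime yields exactly $\Lambda\,(f^+-\Lambda^{-2}f^-)$ in the favourable direction. Integrating from $0$ to $r$ and changing variables $s=v(t)$ would then give
\[
\tfrac12 v'(r)^2\le\Lambda\lambda_n\bigl(F_\Lambda(v(0))-F_\Lambda(v(r))\bigr)\le\Lambda\lambda_n\overline F_\Lambda(v(0)).
\]
I expect this step to be the main obstacle. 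Since the coefficient switches with the sign of $v''$, a careful treatment of $E$ on the intervals where it is monotone is needed. The Lipschitz regularity of $f$, which makes $v\in C^2$, keeps this rigorous.

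\textbf{Conclusion.} Evaluating at $r=R$ shows that $F_\Lambda(v_n(0))\ge0$, possibly after a sharper version of the same argument. So if $\oLL<0$, no maxima lie near $\ell$, and $\underline\lambda=\infty$. Otherwise, by the mean value theorem there is $r_n\in(0,R)$ with $|v_n'(r_n)|=v_n(0)/R$, which gives
\[
\frac{1}{2\Lambda R^2}\le\lambda_n\frac{\overline F_\Lambda(v_n(0))}{v_n(0)^2}.
\]
As in the $p$-Laplacian proof, $\limsup_{s\to\ell}\overline F_\Lambda(s)/s^2\le\oLL-\min\{0,\uLL\}$. Letting $n\to\infty$ yields $\tilde\lambda\ge\underline\lambda:=\bigl(2\Lambda R^2(\oLL-\min\{0,\uLL\})\bigr)^{-1}$, with $\underline\lambda=\infty$ when $\uLL=\oLL=0$.
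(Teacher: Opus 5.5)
Your proposal is correct and follows essentially the paper's proof: comparison with the $\mathcal{M}^+$ problem on a ball, radial symmetry via \cite{DS07}, an energy estimate for the radial ODE, and the mean value theorem, which yields the same $\underline\lambda$. The step you flagged does go through: where $v''<0$ you get $E'\le\Lambda\lambda_n f(v)(-v')$, and where $v''\ge0$ you get $E'\le\Lambda^{-1}\lambda_n f(v)(-v')$; both are bounded by $\lambda_n\bigl(\Lambda f^+(v)-\Lambda^{-1}f^-(v)\bigr)(-v')$ because $\Lambda^{-1}\le1\le\Lambda$, which is a pointwise version of the paper's decomposition over $A_n^\pm$ and gives exactly \eqref{eq:key_ineq_3}, so evaluating at $r=R$ yields $F_\Lambda(v_n(0))\ge0$ with no sharpening needed.
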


\begin{remark}
Since $F(s) \leq F_\Lambda(s) \leq \int_0^s f^+(\rho)\,\mathrm{d}\rho$ 
for all $s\geq 0$, condition~\eqref{eq:hyp_infsup_nondiv} is implied by 
the $\Lambda$-independent conditions
\[
-\infty < \liminf_{s\to \ell^+} \frac{F(s)}{s^2} 
\qquad\text{and}\qquad 
\limsup_{s\to \ell^+} \frac{\int_0^s f^+(\rho)\,\mathrm{d}\rho}{s^2} < \infty.
\]
Note also that~\eqref{eq:hyp_infsup_nondiv} coincides with~\eqref{eq:hyp_infsup2} when $\Lambda=1$.
\end{remark}

\begin{proof}
The general strategy follows the same lines as the proof of Theorem~\ref{th:Intro_NonExist}.
Again, we carry out the proof for the cases $\ell=0$ and $\ell=\infty$ simultaneously. We may assume that $\{\alpha_n\}$ is decreasing if $\ell=0$ and increasing if $\ell=\infty$. We first establish our claim under the assumption $f(0)\geq 0$, and then we address the case $f(0)<0$.
\smallskip

\textbf{Case $f(0)\geq 0$.}

Arguing by contradiction, suppose that~\eqref{eq:nondiv_problem} has a bifurcation point from $\ell$, denoted by $\tilde\lambda \geq 0$. Then there exists a sequence of pairs $(\lambda_n, u_n) \in \R^+\times \lio$, where each $u_n\geq 0$ is a nontrivial solution to~\eqref{eq:nondiv_problem}, such that $\lambda_n$ converges to $\tilde\lambda$ and $\|u_n\|_\lio\to \ell$. We may assume that $\|u_n\|_\lio\in (\min\{\alpha_{n-1}, \alpha_{n+1}\}, \alpha_n]$ for each $n\in\N$.

Let $B_R$ be a ball of radius $R>0$ centred at $0$ containing $\Omega$. Extending $u_n$ by zero to $B_R\setminus\Omega$, and noting that $f(0) \geq 0$, the resulting extension $\underline{u}_n$ is a subsolution to
\begin{equation*}
	\begin{cases}
		-\mathcal{M}^+_{\Lambda}(D^2v) = \lambda_n f(v) & \text{in } B_R,\\
		v=0 & \text{on } \partial B_R.
	\end{cases}
\end{equation*}
Using $\alpha_{n}$ as supersolution, \cite[Theorem 2.3.1]{Sat73} yields a solution $v_n$ satisfying $\underline{u}_n \leq v_n \leq \alpha_n$. By \cite{DS07}, $v_n$ is radial with $v_n'(r) < 0$ for $r \in (0, R)$. Note that $v_n(0)=\|v_n\|_{L^\infty(B_R)} \in (\min\{\alpha_{n-1}, \alpha_{n+1}\}, \alpha_n]$. In particular, $v_n(0)\to \ell$ as $n \to \infty$.

Since $v_n'(r) < 0$, the eigenvalues of $D^2v_n$ are $v_n''(r)$ and $\frac{v_n'(r)}{r} < 0$ (with multiplicity $N-1$), so the equation becomes
\begin{equation}\label{eq:radial_var}
-\alpha(r) v_n''(r) - \frac{1}{\Lambda}(N-1)\frac{v_n'(r)}{r} = \lambda_n f(v_n(r)),
\end{equation}
where $\alpha(r) = \frac{1}{\Lambda}$ if $v_n''(r) \leq 0$ and $\alpha(r) = \Lambda$ if $v_n''(r) \geq 0$.

Multiplying~\eqref{eq:radial_var} by $-v_n'(r)$ and integrating from $0$ to $r$, it follows
\begin{equation}\label{eq:integrated}
\int_0^r \alpha(t) v_n''(t) v_n'(t) \, \mathrm{d}t + \frac{N-1}{\Lambda}\int_0^r \frac{(v_n'(t))^2}{t} \, \mathrm{d}t = \lambda_n \int_0^r f(v_n(t))(-v_n'(t)) \, \mathrm{d}t.
\end{equation}
We decompose the first term using $v_n'(0) = 0$ as
\begin{equation}
\label{eq:integrated2}
\int_0^r \alpha(t) v_n''(t) v_n'(t) \, \mathrm{d}t = \frac{1}{2\Lambda}(v_n'(r))^2 + \left(\Lambda - \frac{1}{\Lambda}\right) \int_{A_n^+} v_n''(t) v_n'(t) \, \mathrm{d}t,
\end{equation}
where $A_n^+ = \{t \in (0,r) : v_n''(t) \geq 0\}$. Similarly, let $A_n^- = \{t \in (0,r) : v_n''(t) \leq 0\}$. In the region $A_n^+$, equation~\eqref{eq:radial_var} becomes
\[
v_n''(t) = -\frac{\lambda_n}{\Lambda} f(v_n(t)) - \frac{N-1}{\Lambda^2}\frac{v_n'(t)}{t},
\]
so, multiplying by $\left(\Lambda - \frac{1}{\Lambda}\right) v_n'(t)$ and integrating, we obtain
\begin{equation}
\label{eq:integrated3}
\left(\Lambda - \frac{1}{\Lambda}\right)\int_{A_n^+} v_n''(t) v_n'(t) \, \mathrm{d}t = \left(1- \frac{1}{\Lambda^2}\right) \lambda_n \int_{A_n^+} f(v_n)(-v_n') \, \mathrm{d}t - \left(1- \frac{1}{\Lambda^2}\right) \frac{N-1}{\Lambda}\int_{A_n^+} \frac{(v_n')^2}{t} \, \mathrm{d}t.
\end{equation}
Combining~\eqref{eq:integrated},~\eqref{eq:integrated2} and~\eqref{eq:integrated3}, and simplifying the resulting expression, we deduce
\begin{align*}
\frac{1}{2\Lambda}(v_n'(r))^2 &+ \frac{N-1}{\Lambda}\int_{A_n^-}\frac{(v_n')^2}{t}\,\mathrm{d}t + \frac{N-1}{\Lambda^3}\int_{A_n^+}\frac{(v_n')^2}{t}\,\mathrm{d}t = \lambda_n\int_{A_n^-} f(v_n)(-v_n')\,\mathrm{d}t + \frac{\lambda_n}{\Lambda^2}\int_{A_n^+} f(v_n)(-v_n')\,\mathrm{d}t.
\end{align*}
Dropping the nonnegative terms on the left-hand side and performing the change of variables $s = v_n(t)$, with $E_n^+ = v_n(A_n^+)$ and $E_n^- = v_n(A_n^-)$, we obtain the estimate
\begin{align}
\frac{1}{2\Lambda}(v_n'(r))^2 &\leq \lambda_n\left(\int_{E_n^-} f(s) \, \mathrm{d}s + \frac{1}{\Lambda^2}\int_{E_n^+} f(s) \, \mathrm{d}s\right). \label{eq:key_ineq_pre}
\end{align}

By splitting $f$ into its positive and negative parts, $f=f^+-f^-$, and bounding the right-hand side of~\eqref{eq:key_ineq_pre} using that $E_n^- \cup E_n^+ = [v_n(r), v_n(0)]$, it follows
\begin{equation}\label{eq:key_ineq_3}
\frac{1}{2\Lambda}(v_n'(r))^2 \leq \lambda_n \left( \int_{v_n(r)}^{v_n(0)} f^+(s) \, \mathrm{d}s - \frac{1}{\Lambda^2} \int_{v_n(r)}^{v_n(0)} f^-(s) \, \mathrm{d}s \right) = \lambda_n \big( F_\Lambda(v_n(0))-F_\Lambda(v_n(r)) \big).
\end{equation}

In the case where $\oLL<0$, by setting $r=R$ in~\eqref{eq:key_ineq_3}, we can follow the same reasoning as in Theorem~\ref{th:Intro_NonExist} to conclude that no $\lambda\geq 0$ is a bifurcation point from $\ell$.

Now we study the case $\oLL\geq 0$. Using the function $\oF_\Lambda$ defined in~\eqref{eq:def_ov_F_Lambda}, we deduce from~\eqref{eq:key_ineq_3} that
\begin{equation}\label{eq:key_ineq}
\frac{1}{2\Lambda}(v_n'(r))^2 \leq \lambda_n\, \overline{F}_\Lambda(v_n(0)).
\end{equation}
By the Mean Value Theorem, there exists $r_n \in (0,R)$ with $|v_n'(r_n)| = \frac{v_n(0)}{R}$. Evaluating~\eqref{eq:key_ineq} at $r = r_n$, we obtain
\begin{equation}\label{eq:key_ineq_2}
\frac{1}{2\Lambda R^2} \leq \lambda_n \frac{\overline{F}_\Lambda(v_n(0))}{v_n(0)^2}.
\end{equation}
Since $\overline{F}_\Lambda(s) = F_\Lambda(s) - \min_{t\in[0,s]} F_\Lambda(t)$, it holds that
\[
\limsup_{s\to \ell^+} \frac{\overline{F}_\Lambda(s)}{s^2} \leq \oLL - \min \left\{0,\uLL \right\}.
\]
Hence, passing to the limit in~\eqref{eq:key_ineq_2}, we deduce
\[
\frac{1}{2\Lambda R^2} \leq \tilde\lambda\, \left(\oLL - \min\left\{0,\uLL\right\} \right).
\]
Since $\tilde\lambda$ is an arbitrary bifurcation point, we conclude that no bifurcation point from $\ell$ exists in $[0,\underline\lambda)$, where
\[
\underline{\lambda} := \begin{cases}
\dfrac{1}{2\Lambda R^2 \left(\oLL - \min\left\{0,\uLL\right\} \right)} & \text{if } \oLL > 0 \text{ or } \uLL < \oLL,  \\[3mm]
\infty & \text{if } \uLL = \oLL = 0.
\end{cases}
\]

\textbf{Case $f(0)<0$.}

By continuity, this case can only occur when $\ell=\infty$, where $\{\alpha_n\}$ is increasing. Let us define the auxiliary function $g\in C(\R)$ as $g(s) = f^+(s)$ for $s\in [0,\alpha_1]$, and $g(s) = f(s)$ for $s\geq \alpha_1$. Furthermore, let $G_\Lambda$ be defined analogously to $F_\Lambda$ (see~\eqref{eq:def_F_Lambda}). Since $g$ and $f$ coincide on $[\alpha_1,\infty)$, the functions $G_\Lambda$ and $F_\Lambda$ differ by a constant for $s\geq \alpha_1$, so that
\[
\liminf_{s\to \infty} \frac{G_\Lambda(s)}{s^2} = \liminf_{s\to \infty} \frac{F_\Lambda(s)}{s^2} \qquad \text{and} \qquad \limsup_{s\to \infty} \frac{G_\Lambda(s)}{s^2} = \limsup_{s\to \infty} \frac{F_\Lambda(s)}{s^2}.
\]
Any nonnegative solution to~\eqref{eq:nondiv_problem} is a subsolution to $-\mathcal{M}^+_\Lambda(D^2 v) = \lambda g(v)$, and the constants $\alpha_n$ are supersolutions. By the sub-supersolution method, any bifurcation point from infinity for~\eqref{eq:nondiv_problem} is also one for problem
\begin{equation*}
	\begin{cases}
		-\mathcal{M}^+_{\Lambda}(D^2u) = \lambda g(u) & \text{in } \Omega,\\
		u=0 & \text{on } \partial \Omega.
	\end{cases}
\end{equation*}
Applying the previous case to this problem, we obtain our claim for~\eqref{eq:nondiv_problem}.
\end{proof}

\section*{Acknowledgements}

The first author is supported by
the FPU predoctoral fellowship of the Spanish Ministry of Universities (FPU21/04849). The second author has received funding from the European Research Council (ERC) under the Grant Agreement No.\ 862342, from AEI project PID2024-156429NB-I00 (Spain), and from the Grant CEX2023-001347-S funded by MICIU/AEI/10.13039/501100011033 (Spain).

\end{document}